\numberwithin{equation}{section} 
\numberwithin{figure}{section} 
\theoremstyle{plain}
\newtheorem{thm}{Theorem}[section]
\theoremstyle{plain}
\newtheorem{lem}[thm]{Lemma}
\theoremstyle{plain}
\newtheorem{prop}[thm]{Proposition}
\theoremstyle{plain}
\newtheorem{cor}[thm]{Corollary}
\theoremstyle{remark}
\newtheorem{rem}[thm]{Remark}
\theoremstyle{remark}
\newtheorem{note}[thm]{Notation}
\theoremstyle{remark}
\newtheorem{example}[thm]{Example}
\theoremstyle{plain}
\newtheorem*{prop*}{Proposition}
\theoremstyle{remark}
\newtheorem*{rem*}{Remark}
\theoremstyle{plain}
\newtheorem{question}{Question}
\theoremstyle{remark}
\theoremstyle{definition}
\theoremstyle{definition}
\newtheorem{defi}[thm]{Definition}
\newcommand{\bc}{\mathbb{C}}
\newcommand{\C}{\mathbb{C}}
\newcommand{\R}{\mathbb{R}}
\newcommand{\N}{\mathbb{N}}
\newcommand{\ba}{\mathbb{A}}
\newcommand{\LND}{\textup{LND}}
\newcommand{\ML}{\textup{ML}}
\newcommand{\Aut}{\textup{Aut}}
\newcommand{\Ker}{\textup{Ker}}
\newcommand{\id}{\textup{id}}
\newcommand{\diff}{\textup{d}}
\begin{document}

\title{Inequivalent embeddings of the Koras-Russell cubic threefold}

\author{A. Dubouloz, L. Moser-Jauslin \and P.-M. Poloni}
\address{Universite de Bourgogne ,
Institut de Math\'ematiques de Bourgogne CNRS -- UMR 5584,
9~avenue Alain Savary - B.P. 47 870, 21078 Dijon, France}
\email{Adrien.Dubouloz@u-bourgogne.fr
 }
\email{lucy.moser-jauslin@u-bourgogne.fr }
\email{pierre-marie.poloni@u-bourgogne.fr }

\begin{abstract} Let $X$ be the hypersurface of the complex affine four-space $\mathbb{a}^4$ defined by the equation $x^2y+z^2+t^3+x=0$. It is well-known that $X$ is an affine contractible smooth threefold, which is not algebraically isomorphic to affine three-space. The main result of this article is to show that there exists another hypersurface $Y$ of $\mathbb{a}^4$, which is isomorphic to $X$, but such that there exists no automorphism of the ambient four-space which restricts to  an isomorphism between $X$ and $Y$. In other words, the two hypersurfaces are inequivalent. To prove this result, we give a description of the automorphism group of $X$. We show  that all algebraic automorphisms of $X$ extend to automorphisms of $\ba^4$. As a corollary, we find that every automorphism of $X$ fixes the point $(0,0,0,0)\in X$.
\end{abstract}

\maketitle

\section{Introduction}

The Koras-Russell cubic threefold is the hypersurface $X$ of the
complex affine space $\mathbb{A}^{4}={\rm
Spec}\left(\mathbb{C}\left[x,y,z,t\right]\right)$ defined by the
equation \[ P=x+x^{2}y+z^{2}+t^{3}=0.\]

In this paper we will study certain properties of this threefold.
The point of view comes from the elementary remark that the
Koras-Russell threefold can be interpreted as a one-parameter
family of Danielewski hypersurfaces. A \emph{Danielewski
hypersurface} is a subvariety of $\mathbb{A}^{3}={\rm
Spec}\left(\mathbb{C}\left[x,y,z\right]\right)$ defined by an
equation of the form $x^ny=q(x,z)$, where $n$ is a non-zero
natural number and $q(x,z)\in\bc[x,z]$ is a polynomial such that
$q(0,z)$ is of degree at least two. Such hypersurfaces have been
studied by the authors in  \cite{D-P}, \cite{MJ-P}  and
\cite{Poloni}. This interpretation allows us to deduce results
similar to the ones for Danielewski hypersurfaces for this threefold.

An important question in affine algebraic geometry asks whether
every embedding of complex affine $k$-space $\ba^k$  in $\ba^n$,
where $k<n$, is rectifiable, i.e., is equivalent to an embedding
as a linear subspace.  The Abyhankar-Moh-Suzuki theorem shows that
the answer   is ``yes" if $n=2$ (\cite{A-M}, \cite{Suzuki}),  and,
by a general result proved independantly by Kaliman \cite{Kaliman}
and Srinivas \cite{Srinivas}, if $n\ge 2k+2$ the answer is also
affirmative. However, all other cases remain open.

Here we are interested in the case of embeddings of hypersurfaces.
It is easy to find affine varieties of dimension $n$ admitting
non-equivalent embeddings into $\ba^{n+1}$.  For example, the
punctured line $\mathbb{A}^1\setminus\{0\}$ has many
non-equivalent embeddings in $\ba^2$. For each $n\in\N$, let
$P_n=x^ny-1$. The subvariety defined by the zero set of $P_n$ is
isomorphic to $\mathbb{A}^1\setminus\{0\}$, however, the induced
embeddings for each $n$ are inequivalent. This can be seen by the
fact that the subschemes defined by $P_n-1=0$ are all
non-isomorphic.  It is more difficult to find examples where all
non-zero fibers of the defining polynomial are irreducible.

 In example 6.3 of \cite{K-Z}, Kaliman and Zaidenberg gave examples of acyclic surfaces which admit non-equivalent embeddings in three-space. In these cases, the obstruction for equivalence is essentially topological, the non-zero fibers of the polynomials which define the two hypersurfaces being non homeomorphic. Even if they are contractible,  these surfaces are algebraically remote from the affine plane due to the fact that they have nonnegative logarithmic Kodaira dimension. In contrast, an example is given \cite{F-MJ} for Danielewski hypersurfaces, where the non-zero fibers are algebraically non-isomorphic, but analytically isomorphic, and in \cite{MJ-P} and \cite{Poloni}, examples of Danielewski hypersurfaces are given where all fibers are algebraically isomorphic, but the embeddings are non-equivalent. Danielewski hypersurfaces are rational, whence close to the affine plane from an algebraic point of view,  but they have non-trivial singular homology groups.  However, the techniques used in \emph{loc. cit.} are purely algebraic and do not depend on the topological properties of these surfaces. As we shall see here, similar ideas can be used to treat the case of a variety diffeomorphic to $\R^6$. For other inequivalent embeddings of hypersurfaces, see, for example, \cite{S-Y}.

In the present article, we use similar techniques as for the
Danielewski hypersurfaces to study the Koras-Russell threefold $X$
above. For a polynomial $f\in\C[x,y,z,t]$, we denote by $V(f)$ the
subscheme of $\ba^4$ defined by the zero set of $f$ in $\ba^4$,
that is, $V(f)={\rm Spec}(\mathbb{C}[x,y,z,t]/(f))$.  The
hypersurface $X=V(P)$ is smooth and contractible, and is therefore
diffeomorphic to $\ba^3$ \cite{Dimca}. However, it was shown by
Makar-Limanov that it is not isomorphic to affine three-space
\cite{ML}. We show that there is another hypersurface $Y=V(Q)$
which is isomorphic to $X$, but there is no algebraic automorphism
of four-space which restricts to an isomorphism between $X$ and
$Y$. Thus we have at least two inequivalent embeddings of $X$. For
this example, the two hypersurfaces are analytically equivalent by
a holomorphic automorphism which preserves the fibers of $P$ and
$Q$, and,  therefore, as for certain examples of Danielewski
hypersurfaces, there is no topological obstruction to the
existence of such an automorphism. In other words, the obstruction
to extending automorphisms in this case is purely algebraic. Also,
for all $c\in\C\setminus\{1\}$, the fibers $V(P+c)$ and $V(Q+c)$
are isomorphic.  It is an open question if $V(P+1)\cong V(Q+1)$.

The methods to study the question of equivalent embeddings are similar to those used in the articles cited above for Danielewski hypersurfaces. However, they must be adapted in order to consider a higher dimensional variety. They are based on certain properties of the automorphism group of the varieties. 
The set of locally nilpotent derivations on a Danielewski hypersurface is explicitly known (see \cite{D-P}; see also \cite{ML2001}), and the Makar-Limanov invariant is non trivial when $n\ge 2$. This restricts the possibilities for automorphisms of these surfaces. The Makar-Limanov invariant of the Koras-Russell threefold is also non trivial \cite{ML}. In section 3, we determine the complete automorphism group of $X$. For this case, new methods are needed, since the restrictions given by the Makar-Limanov invariant do not suffice to determine the automorphism group. 
 As a corollary, we find the surprising result that any algebraic automorphism of $X$ fixes the point $(0,0,0,0)\in X$. (See corollary \ref{fix-origin}.) Also, all automorphisms of $X$ extend to automorphisms of $\ba^4$.

{\bf Acknowledgements :} We would like to thank Gene Freudenburg
and St\'ephane V\'en\'ereau for helpful discussions concerning the
automorphism group of $X$.

\section{Inequivalent embeddings in $\mathbb{A}^{4}$ }

We  denote by $P$ and $Q$ the following polynomials of
$\C^{[4]}=\C[x,y,z,t]$:

$$P=x^2y+z^2+x+t^3 \quad \text{ and } \quad Q=x^{2}y+\left(1+x\right)\left(z^{2}+x+t^{3}\right).$$
The Koras-Russell cubic threefold is the hypersurface
$X\subset\mathbb{A}^{4}={\rm
Spec}\left(\mathbb{C}\left[x,y,z,t\right]\right)$ defined by the
equation $P=0$ whereas  $Y$ denote the hypersurface defined by the
equation $Q=0$.

We start by giving  some definitions to clarify the difference
between {\it inequivalent embeddings} and {\it inequivalent
hypersurfaces}.

\begin{defi}
A regular map $\phi:X\to Z$ between two algebraic varieties is a
{\em closed embedding of $X$ in $Z$} if:
\begin{enumerate}
\item $\phi(X)$ is a closed subvariety of $Z$; \item
$\phi:X\to\phi(X)$ is an isomorphism.
\end{enumerate}
\end{defi}

\begin{defi}
Two embeddings $\phi_1,\phi_2:X\to Z$ are {\em equivalent} if
there exists an automorphism $\Psi$ of $Z$ such that
$\phi_2=\Psi\circ\phi_1$.
\end{defi}

\begin{defi} Two subvarieties $X_1$ and $X_2$ of $Z$ are {\em equivalent} if there exists an automorphism $\Psi$ of $Z$ such that $\Psi(X_1)=X_2$.  If $X_1$ and $X_2$ are hypersurfaces, then we say they are {\it equivalent hypersurfaces}.
\end{defi}

In this article, we will show that $V(P)$ and $V(Q)$ are
isomorphic as abstract threefolds, however, they are inequivalent
as hypersurfaces of $\ba^4$, in the sense of  definition 2.3. In
other words, no isomorphism between $V(P)$ and $V(Q)$ extends to
an automorphism of $\ba^4$. We will do this in two steps. First,
we will find an isomorphism $\phi$ between $V(P)$ and $V(Q)$ which
does not extend to an automorphism of $\ba^4$. This implies that
the two embeddings $i_1:V(P)\to\ba^4$ and
$i_2\circ\phi:V(P)\to\ba^4$ are inequivalent in the sense of
definition 2.2. (Here, $V(P)$ is the scheme defined as ${\rm
Spec}(\C[x,y,z,t]/(P))$, and $i_1$ is the embedding of $V(P)$ in
$\ba^4$ corresponding to the canonical homomorphism
$i_1^*:\C[x,y,z,t]\to\C[x,y,z,t]/(P)$. The embedding $i_2$ is
defined similarly, by replacing $P$ by $Q$.)

The second step, discussed in section 3,  will be to study the
automorphism group of $V(P)$. We will show in section 4 that all
automorphisms of this hypersurface extend to automorphisms of
$\ba^4$.

Finally, putting these two results together, we will show the stronger
result that $V(P)$ and $V(Q)$ are inequivalent hypersurfaces.

In this article, we use the following key result  concerning the
Makar-Limanov invariant of an irreducible affine variety. Given an
irreducible affine variety $Z$, denote by $\C[Z]$ the ring of
regular functions on $Z$. A {\it locally nilpotent derivation} of
$\C[Z]$  is a $\C$-derivation $\partial:\C[Z]\to\C[Z]$ such that
for any $f\in\C[Z]$, there exists $n\in\N$ such that
$\partial^n(f)=0$. If $\partial$ is a locally nilpotent derivation
of $\C[Z]$, then $\exp(\lambda\partial)$ defines an algebraic
action of $(\C,+)$ on $Z$, and all $(\C,+)$ actions arise in this
way. The kernel of a locally nilpotent derivation  is the
subalgebra of invariants in $\C[Z]$ of the corresponding action.
The {\it Makar-Limanov invariant}, $\ML(\C[Z])$ is defined as the
intersection of  the kernels of all locally nilpotent derivations
on $\C[Z]$. If $Z$ is affine space, the Makar-Limanov invariant is
simply $\C$. However, it was shown in \cite{ML} that
$\ML(\C[X])=\C[x]\not=\C$. This implies that the Koras-Russell threefold $X$ is not isomorphic
to the affine three-space. In  theorem 9.1 and example 9.1 of
\cite{K-ML}, the result of Makar-Limanov is generalized, and it is
shown  in particular that for every $c\in\C$, and every $\lambda\in\C^*$,
$$\ML\left(\C[x,y,z,t]/\left(Q-c\right)\right)=\ML\left(\C[x,y,z,t]/\left(\lambda P-c\right)\right)=\C[x].$$

We will use this result throughout the article.

\begin{thm}\label{main thm}The
following hold :
\begin{itemize}
\item[a)] The endomorphism of $\mathbb{A}^{4}$ defined by
$(x,y,z,t)\mapsto (x,(1+x)y,z,t)$ induces an isomorphism
$\phi:X\stackrel{\sim}{\rightarrow} Y$.

\item[b)] $\phi$ cannot be extended to an algebraic automorphism of
$\mathbb{A}^{4}$.
\end{itemize}
\end{thm}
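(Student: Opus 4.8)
My plan is to treat the two parts separately: part (a) by producing an explicit inverse, and part (b) by a rigidity argument built on the quoted Makar-Limanov computation.

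For (a), the endomorphism $\phi\colon(x,y,z,t)\mapsto(x,(1+x)y,z,t)$ carries $X$ into $Y$ because $Q(x,(1+x)y,z,t)=(1+x)P$. To see it is an isomorphism I would exhibit the inverse directly. Writing $\eta$ for the second coordinate on $Y$, one has on $Y$ both $(1+x)y=\eta$ and, after dividing the relation $Q=0$ by $1+x$, the relation $x^2y=-(z^2+x+t^3)$. Thus the function $y$ we wish to recover satisfies two equations whose "denominators'' $x^2$ and $1+x$ are coprime in $\C[x]$; indeed $(1-x)(1+x)+x^2=1$, so Bézout yields the polynomial expression $y=(1-x)\eta-(z^2+x+t^3)$ valid on $Y$. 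Hence $\Psi\colon(x,\eta,z,t)\mapsto(x,(1-x)\eta-z^2-x-t^3,z,t)$ is a morphism; one checks that $P\circ\Psi=(1-x)Q$, so $\Psi(Y)\subset X$, and that $\Psi$ and $\phi$ are mutually inverse. This proves (a).

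For (b), I would argue by contradiction: suppose $\phi$ extends to an automorphism $F=(F_1,F_2,F_3,F_4)$ of $\ba^4$. Since $F$ agrees with $\phi$ on $X$, we get $F_i\equiv\phi_i\pmod P$; and since $F(X)=\phi(X)=Y$ with $P,Q$ irreducible, the function $Q\circ F=F^{*}(Q)$ is an irreducible element of the ideal $(P)$, whence $Q\circ F=cP$ for some $c\in\C^{*}$. Consequently $F$ restricts, for every $s\in\C$, to an isomorphism between the fibers $V(P-s)$ and $V(Q-cs)$. The engine is now the quoted identity $\ML(\C[x,y,z,t]/(P-s))=\ML(\C[x,y,z,t]/(Q-cs))=\C[x]$. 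Because $F^{*}$ carries Makar-Limanov invariants isomorphically onto one another and $x$ generates each as an algebra, $F_1=F^{*}(x)$ must restrict on each fiber $V(P-s)$ to a degree-one generator $\alpha_s x+\beta_s$ of $\C[x]$. Reading this through the coordinate change $(x,y,z,t)\leftrightarrow(x,P,z,t)$ valid on $\{x\neq0\}$, I expect to conclude that, writing $F_1=x+Pa_1$, the polynomial $a_1$ lies in $\C[x,P]$ and has the special shape $a_1=g(P)+x\,h(P)$; equivalently $F_1\in\C[x,P]$. This rigidity — an extension cannot move the coordinate $x$ arbitrarily — is the decisive structural constraint.

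It then remains to feed this rigid form of $F_1$ back into $Q\circ F=cP$ together with the \emph{invertibility} of $F$, and to reach a contradiction by specializing to the hyperplane $\{x=-1\}$, where the factor $1+x$ occurring in $Q$ and in $\phi$ degenerates: along $x=-1$ the map $\phi$ multiplies the $y$-coordinate by the non-unit $1+x$, collapsing a direction that no globally invertible polynomial map can reproduce, so the fiberwise isomorphisms $V(P-s)\cong V(Q-cs)$ cannot all be induced by a single automorphism. I expect this last step to be the main obstacle. The reason is that the first-order data alone — the congruence $F\equiv\phi\pmod P$ and the Jacobian condition restricted to $X$ — turns out to be \emph{consistent} (the rank-one correction $DF|_X=D\phi|_X+a\,(\nabla P)$ can be made to have constant determinant on $X$), so the contradiction must be extracted from the global invertibility of $F$ rather than from any congruence modulo a power of $P$. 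Concretely, I would analyze the induced automorphisms of the fibers $\{x=x_0\}$ of $F_1$ as $x_0\to-1$, or perform a degree and leading-term analysis of the identity $Q\circ F=cP$, to show that the rigid $F_1\in\C[x,P]$ forces $F$ to fail invertibility.
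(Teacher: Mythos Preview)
Your argument for (a) is correct and is essentially the paper's: both produce the explicit inverse $\Psi(x,\eta,z,t)=(x,(1-x)\eta-z^2-x-t^3,z,t)$ and verify $\Phi^*(Q)=(1+x)P$, $\Psi^*(P)=(1-x)Q$, together with the compositions.

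For (b) there is a genuine gap. You correctly begin with the Makar--Limanov constraint and reach (after the step you label ``I expect to conclude'') that $F_1=F^*(x)\in\C[x,P]$; in fact one gets $F_1=x+\beta(P)$ with $\beta(0)=0$. But from there your proposal is only a plan: you admit the final step is ``the main obstacle'' and offer two heuristics. The first---that along $\{x=-1\}$ the map $\phi$ ``collapses a direction''---is not correct: on $X\cap\{x=-1\}$ one has $y=1-z^2-t^3$, so this slice is an $\ba^2$ in $(z,t)$, and $\phi$ carries it \emph{isomorphically} onto $Y\cap\{x=-1\}=\{(-1,0,z,t)\}$; nothing collapses. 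The second heuristic, a degree/leading-term analysis of $Q\circ F=cP$, is not developed, and since automorphisms of $\ba^4$ can have arbitrarily high degree there is no obvious contradiction there.

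The paper closes this gap with a different, concrete idea that your outline does not contain. After strengthening your conclusion to $F^*(x)=\lambda x$ (using that $V(x,\,P-c)$ and $V(x,\,Q-c)$ are the unique singular $x$-slices), it compares the \emph{auxiliary hypersurfaces} $W_P=V(P-x)$ and $W_Q=V(Q-x)$: a further singularity check forces $\mu=\lambda$, hence $F(W_P)=W_Q$. Both $W_P$ and $W_Q$ are singular along the line $\{x=z=t=0\}$, but a tangent-cone computation shows the cone of $W_P$ degenerates exactly at $(0,0,0,0)$ while that of $W_Q$ degenerates exactly at $(0,-1,0,0)$. Hence any such $F$ must send the origin to $(0,-1,0,0)$, contradicting $\phi(0,0,0,0)=(0,0,0,0)$. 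This tangent-cone argument is the missing ingredient; your specialization to $x=-1$ and degree considerations do not substitute for it.
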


\begin{proof}
Let $\Phi$ and $\Psi$ be the endomorphisms of $\mathbb{A}^{4}$
defined
 by $$\begin{cases}
\Phi: & \left(x,y,z,t\right)\mapsto\left(x,\left(1+x\right)y,z,t\right)\\
\Psi: &
\left(x,y,z,t\right)\mapsto\left(x,\left(1-x\right)y-x-z^2-t^3,z,t\right)\end{cases}$$
Denote by $\Phi^*$  the endomorphism of $\C[x,y,z,t]$
corresponding to $\Phi$ ($\Phi^*$ fixes $x$, $z$ and $t$, and
$\Phi^*(y)=(1+x)y$), and  by $\Psi^*$ the corresponding
endomorphism to $\Psi$. One checks that $\Phi^*(Q)=(1+x)P$ and
$\Psi^*(P)=(1-x)Q$. Thus, $\Phi$ induces a well-defined  morphism
of algebraic varieties $\phi:X\to Y$, and $\Psi$ induces a
well-defined regular map $\psi:Y\to X$.

Since $\phi$ and $\psi$ are morphisms of schemes over
$\mathbb{A}^{3}={\rm Spec}\left(\C[x,z,t]\right)$, the identities
$(\Phi^*\circ\Psi^*)(y)=y-P $ and $(\Psi^*\circ\Phi^*)(y)=y-Q $
guarantee that they are inverse isomorphisms.

The second assertion follows from the remark  that
$\phi((0,0,0,0))=(0,0,0,0)$, and from the first part of
proposition \ref{prop:extension} below.
\end{proof}

\begin{prop}\label{prop:extension} The following results hold.
\begin{itemize}
\item[(i)] Suppose that there exists $\Xi$  an algebraic
automorphism of $\mathbb{A}^{4}$ which restricts to an isomorphism
 between $X$ and $Y$. Then $\Xi$ does not fix the point $(0,0,0,0)$, i.e.
$\Xi\left(0,0,0,0\right)\neq\left(0,0,0,0\right)$. \item[(ii)] If
$\Xi$ is an algebraic automorphism of $\mathbb{A}^{4}$ which
restricts to an automorphism of $X$, then $\Xi$ fixes the point
$(0,0,0,0)$. \item[(iii)] If $\Xi$ is an algebraic automorphism of
$\mathbb{A}^{4}$ which restricts to an automorphism of $Y$, then
$\Xi$ fixes the point $(0,-1,0,0)$.
\end{itemize}
\end{prop}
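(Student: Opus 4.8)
The plan is to reduce all three statements to the single observation that a polynomial automorphism $\Xi$ of $\ba^4$ carrying one of our hypersurfaces onto another must multiply the defining polynomial by a nonzero constant. Indeed, if $\Xi$ restricts to an isomorphism $X\to Y$, then $\Xi^*(Q)$ vanishes exactly on $\Xi^{-1}(Y)=X$; since $P$ and $Q$ are irreducible and $X,Y$ are reduced, this forces $\Xi^*(Q)=cP$ for some $c\in\C^*$. The three cases correspond to the identities $\Xi^*(P)=cP$ for (ii), $\Xi^*(Q)=cQ$ for (iii), and $\Xi^*(Q)=cP$ for (i). Each descends to an isomorphism of coordinate rings, so I may freely use the computation $\ML=\C[x]$ quoted above.

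First I would locate the image of the relevant test point. Since $\ML(\C[X])=\C[x]$, the induced automorphism of $\C[X]$ preserves $\C[x]$ and hence acts on it by $x\mapsto\alpha x+\beta$ with $\alpha\in\C^*$. Thus $x\colon X\to\ba^1$ is canonical up to an affine reparametrisation of the base, and its unique singular fibre, the cylinder $\{x=0\}\cong\{z^2+t^3=0\}\times\ba^1_y$ over the cuspidal cubic, must be preserved; this kills the translation, giving $\Xi^*(x)=\alpha x$ in $\C[X]$. Consequently $\Xi$ maps the singular fibre to itself and its singular locus, the line $L=\{x=z=t=0\}$, onto itself. As the tested point lies on $L$, its image again lies on $L$, so the only quantity left to determine is its $y$-coordinate.

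The heart of the argument, and the \textbf{main obstacle}, is to pin down this $y$-coordinate, since everything used so far is invariant under translation along $L$ and cannot by itself distinguish points of $L$. Here I would exploit the relation
\[ z^2+t^3=-x(1+xy) \]
holding in $\C[X]$ (it is just $P=0$ rewritten), together with the cuspidal geometry of the central fibre. Because the Makar-Limanov invariant of the central fibre is the coordinate ring of the cusp, $\Xi$ preserves the projection to it, whose automorphisms are $z\mapsto a^3z,\ t\mapsto a^2t$; this forces $\Xi^*(z)\equiv a^3z$ and $\Xi^*(t)\equiv a^2t$ modulo $(x)$, with $\alpha=a^6$. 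Substituting into $\Xi^*(z^2+t^3)=-\alpha x\bigl(1+\alpha x\,\Xi^*(y)\bigr)$ and comparing both sides to \emph{second order} in the canonical function $x$ (using that $x$ is a nonzerodivisor in the domain $\C[X]$) should force $\Xi^*(z^2+t^3)=\alpha(z^2+t^3)$, equivalently $\Xi^*(y)=\alpha^{-1}y$. In particular $\Xi^*(y)$ vanishes at the origin, so the image point is $(0,0,0,0)$, proving (ii). The delicacy is exactly that the first-order data detect only $L$; it is the asymmetry introduced by the linear term $x$ in $P$, which breaks invariance under $y$-translations, that singles out the point.

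For (iii) and (i) I would run the same computation for $Q$. In the coordinate $y'=y+1$ one has $Q=x^2y'+(1+x)(z^2+t^3)+x$, which matches $P=x^2y+(z^2+t^3)+x$ except for the extra factor $1+x$ on the cusp term; tracking this factor through the second-order comparison shifts the forced value of the $y$-coordinate from $0$ to $-1$, yielding the distinguished point $(0,-1,0,0)$ of $Y$ and hence (iii). Finally, for the mixed identity $\Xi^*(Q)=cP$ the very same analysis shows that $\Xi$ must send the distinguished point $(0,0,0,0)$ of $X$ to the distinguished point $(0,-1,0,0)$ of $Y$; as these differ, $\Xi(0,0,0,0)\neq(0,0,0,0)$, which is (i).
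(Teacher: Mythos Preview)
Your argument has a genuine gap at the crucial step. The assertion that a second-order comparison in $x$ forces $\Xi^*(z^2+t^3)=\alpha(z^2+t^3)$, i.e.\ $\Xi^*(y)=\alpha^{-1}y$ in $\C[X]$, is simply false. Writing $\Xi^*(z)=a^3z+xg_1$ and $\Xi^*(t)=a^2t+xg_2$, one obtains
\[
\Xi^*(z^2+t^3)=a^6(z^2+t^3)+x\bigl(2a^3zg_1+3a^4t^2g_2\bigr)+x^2(\cdots),
\]
and after substituting $z^2+t^3=-x-x^2y$ and matching with $-\alpha x-\alpha^2x^2\Xi^*(y)$ one does recover $\alpha=a^6$; but the cross terms and the $x^2(\cdots)$ contribute a correction to $\Xi^*(y)$ over which you have no control. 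Concretely, the paper's own Example~4.4 gives an automorphism with $\alpha=1$ and $\tilde\varphi(y)=(1+6xzt^2)y-(G-6zt^2)$ for a nonzero $G\in\C[x,z,t]$; this is not equal to $y$ in $\C[X]$ (for instance it takes the value $63$ at the point $(0,0,1,-1)\in X$). So the identity you claim cannot hold, and nothing in your expansion forces the correction term to vanish at the origin either.

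The underlying issue is that your computation uses only the restriction of $\Xi$ to $X$; if it worked it would prove directly that \emph{every} abstract automorphism of $X$ fixes the origin. In the paper that statement is Corollary~\ref{fix-origin}, and it is obtained only \emph{after} the substantial Theorem~\ref{X-ext} showing that all automorphisms of $X$ extend to $\ba^4$. The paper's proof of the proposition exploits the ambient space in an essential way that your intrinsic argument cannot reproduce: from $\Xi^*(P)=\lambda P$ in $\C[x,y,z,t]$ together with $\Xi^*(x)=\lambda x$ one gets $\Xi^*(P-x)=\lambda(P-x)$, so $\Xi$ preserves the \emph{auxiliary} hypersurface $W_P=V(P-x)\subset\ba^4$, which is not contained in $X$ at all. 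The singular locus of $W_P$ is the line $L=\{x=z=t=0\}$, and the tangent cone of $W_P$ at $(0,y_0,0,0)$ is $V(y_0x^2+z^2)$, a double hyperplane precisely when $y_0=0$; this invariant of $W_P$ singles out the origin. The analogous computation for $W_Q=V(Q-x)$ gives the double hyperplane at $y_0=-1$, yielding (iii) and, in the mixed case, (i).
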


\begin{proof}
For part (i), suppose that $\Xi$ is an automorphism of $\ba^4$ which
extends an isomorphism between $X$ and $Y$.

 Note that since
$Q=x^2y+(1+x)(z^2+x+t^3)$ is irreducible, it follows that
$\Xi^*(Q)=\lambda P$ for a certain $\lambda\in\C^*$. Thus, $\Xi$
maps $V(P-\lambda^{-1}c)$ isomorphically onto $V(Q-c)$ for every
$c\in\C$.

Let us show first that $\Xi^*(x)=\lambda x$. As mentioned above,
for any $c\in\C$,  the Makar-Limanov invariant of $\left(\C[x,y,z,t]/\left(Q-c\right)\right)$ and
 of $\left(\C[x,y,z,t]/\left(\lambda P-c\right)\right)$ are  $\C[x].$
This implies that for every $c\in\mathbb{C}$, $\Xi^{*}$ restricts
to an isomorphism
\[\mathbb{C}\left[x\right]=\ML\left(\mathbb{C}\left[x,y,z,t\right]/\left(Q-c\right)\right)\stackrel{\sim}{\rightarrow}\ML\left(\mathbb{C}\left[x,y,z,t\right]/\left(\lambda
P-c\right)\right)=\mathbb{C}\left[x\right].\] Combined with the
fact that for any $c\in\C$, $V(x-a,Q-c)$ and $V(x-a,P-c)$ are
isomorphic to $\ba^2$ if and only if $a\not=0$, we find that
$\Xi^*$ preserves the ideal $(x)$. 

Let $\Xi^*(x)=\mu x$ with $\mu\in\C^*$. Then $\Xi^{*}\left(Q-x\right)=\lambda P-\mu
x$. One checks easily that the variety $Z={\rm
Spec}\left(\mathbb{C}\left[x,y,z,t\right]/\left(Q-x\right)\right)$
is singular along the line $L_{z}=\left\{ x=z=t=0\right\} $ in
$\mathbb{A}^{4}$. On the other hand, it follows from the Jacobian
criterion that the variety $\Xi^{-1}\left(Z\right)=V\left(\lambda
P-\mu x\right)$ is singular if and only if $\lambda=\mu$. This
implies that $\lambda=\mu$.

In other words, we have shown that  $\Xi^*(Q-x)=\lambda (P-x)$.
Thus, $\Xi$ restricts to an isomorphism between $W_P=V(P-x)$ and
$W_Q=V(Q-x)$.

For parts (ii) and (iii), a similar argument shows that any
automorphism of $\ba^4$ which extends an automorphism of $X$ will
preserve the subvariety $W_P$,  and any automorphism of $\ba^4$
which extends an automorphism of $Y$ will preserve the subvariety
$W_Q$.

Now we look more carefully at the two subvarieties $W_P$ and
$W_Q$. They are both singular along the line.
$\left\{x=z=t=0\right\}$.  We look now at the tangent cone of each
singular point of $W_P$ and $W_Q$. Let $p_0=(0,y_0,0,0,0)$.

 We deduce from the
identity
\begin{eqnarray*} P-x & = &
y_{0}x^{2}+z^{2}+x^{2}\left(y-y_{0}\right)+t^{3}\end{eqnarray*}
that the tangent cone $TC_{p_{0}}(W_P)$ of $W_P$ at $p_{0}$ is
isomorphic to
 ${\rm
Spec}\left(\mathbb{C}\left[x,y,z,t\right]/\left(z^{2}+y_{0}x^{2}\right)\right).$
In particular $TC_{p_{0}}W_P$ consists of two distinct hyperplanes
for $y_{0}\neq 0$ and of the double hyperplane $\left\{
z=0\right\} $ if $y_0=0$. On the other hand, we deduce from the
identity\begin{eqnarray*}Q-x & = &
(y_{0}+1)x^{2}+z^{2}+x^{2}\left(y-y_{0}\right)+t^{3}+xz^{2}+xt^{3}\end{eqnarray*}
that the tangent cone of $W_Q$ at a  point
$p_{0}=\left(0,y_{0},0,0\right)$ is isomorphic to \hfill\break
 ${\rm
Spec}\left(\mathbb{C}\left[x,y,z,t\right]/\left(z^{2}+(y_{0}+1)x^{2}\right)\right)$.
Thus the tangent cone $TC_{p_{0}}(W_Q)$ consists of two distinct
hyperplanes for $y_{0}\neq -1$ and of the double hyperplane
$\left\{ z=0\right\} $ if $y_0=-1$.

For part (i), since $\Xi(W_P)=W_Q$, we have that
$\Xi(0,0,0,0)=(0,-1,0,0)$. For part (ii), any automorphism of
$W_P$ fixes the point $(0,0,0,0)$. Finally, for part (iii), any
automorphism of $W_Q$ fixes the point $(0,-1,0,0)$. This completes
the proof.
\end{proof}

\begin{cor}
The Koras-Russell cubic threefold admits at least two
non-equivalent embeddings in $\mathbb{A}^{4}$.
\end{cor}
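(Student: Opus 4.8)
The plan is to produce two explicit embeddings of the single abstract threefold $X\cong V(P)$ into $\ba^4$ and to deduce their inequivalence directly from Theorem \ref{main thm}. First I would take the canonical closed embedding $i_1\colon V(P)\to\ba^4$, whose image is $X$, together with the composite $i_2\circ\phi\colon V(P)\to\ba^4$, where $\phi\colon X\to Y$ is the isomorphism of part a) of Theorem \ref{main thm} and $i_2\colon V(Q)\to\ba^4$ is the canonical closed embedding, whose image is $Y$. Because $\phi$ is an isomorphism onto $Y$, the map $i_2\circ\phi$ is again a closed embedding in the sense of definition 2.1, so that $i_1$ and $i_2\circ\phi$ are two closed embeddings of one and the same variety into $\ba^4$.

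Next I would argue by contradiction. Suppose these embeddings are equivalent in the sense of definition 2.2, so that there is an automorphism $\Psi$ of $\ba^4$ with $i_2\circ\phi=\Psi\circ i_1$. Since $i_1$ is the inclusion of $X$, reading this equality on $X$ says precisely that $\Psi$ restricts on $X$ to the map $i_2\circ\phi$; hence $\Psi(X)=Y$ and, under the canonical identifications, $\Psi|_X=\phi$. In other words $\Psi$ would be an automorphism of $\ba^4$ extending the isomorphism $\phi$.

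This is exactly what is forbidden by part b) of Theorem \ref{main thm} (equivalently, it contradicts part (i) of Proposition \ref{prop:extension}, since $\phi(0,0,0,0)=(0,0,0,0)$ as computed in the proof of the theorem, whereas any extension would have to move the origin). Therefore $i_1$ and $i_2\circ\phi$ are inequivalent, which furnishes the desired two inequivalent embeddings. I do not expect a genuine obstacle here, as all the substance already resides in the proved theorem; the only point requiring care is the bookkeeping in the second paragraph, namely checking that the relation $i_2\circ\phi=\Psi\circ i_1$ pins $\Psi|_X$ down to be $\phi$ itself, so that the non-extension statement of Theorem \ref{main thm} applies verbatim rather than merely to some unspecified isomorphism $X\to Y$.
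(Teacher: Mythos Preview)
Your proof is correct and follows exactly the same approach as the paper: take the two embeddings $i_1$ and $i_2\circ\phi$ and observe that their equivalence would force an automorphism of $\mathbb{A}^4$ extending $\phi$, contradicting Theorem~\ref{main thm}(b). The paper's proof is simply the one-line version of what you wrote; your additional bookkeeping about why $\Psi|_X=\phi$ is accurate and harmless.
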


\begin{proof} Consider the inclusions $i_1: X\hookrightarrow\ba^4$ and $i_2:Y\hookrightarrow\ba^4$. The embeddings $i_1$ and $i_2\circ\phi$ are inequivalent embeddings by the theorem \ref{main thm}.
\end{proof}

\section{The automorphism group of  $X$}
\def\A{{\mathcal A}}

We will now determine the structure of the  automorphism group
$\Aut(X)$. We start with some notation. If $S$ is a ring and $R$
is a subring,  then $\Aut_R(S)$ denotes the group of ring
automorphisms of $S$ which fix $R$. Denote by $\C[X]=\C^{[4]}/(P)$
the ring of regular functions on $X$. The group $\Aut(X)$ is
isomorphic to  the group $\Aut(\C[X])=\Aut_\C(\C[X])$.

\begin{note} Denote by $I=(x^2,z^2+t^3+x)\subset\C[x,z,t]$ the ideal generated by $x^2$ and $z^2+t^3+x$. Let $\A$ be the subgroup of $\Aut(\C[x,z,t])$ of automorphisms which preserve the ideals $(x)$ and $I$. Let $\A_1$ be the subgroup of $\A$ of automorphisms $\varphi\in\A$ such that $\varphi$ fixes $x$, and $\varphi\equiv\id\mod (x)$. Finally, let $\A_2$ be the subgroup of $\Aut_{\C[x]}(\C[x][z,t])$ which are equivalent to the identity modulo $x^2$:

$
\begin{array}{rl }
 \A     &=\{\varphi\in\Aut(\C[x,z,t]) \mid \varphi((x))=(x), \varphi(I)=I\};
    \\
 \A_1     &   =\{\varphi\in\Aut_{\C[x]}(\C[x][z,t])\mid\varphi(I)=I, \varphi\equiv\id\mod (x)\};
\\
\A_2&=\{\varphi\in\Aut_{\C[x]}(\C[x][z,t])\mid\varphi\equiv\id\mod
(x^2)\}.

\end{array}
$
\end{note}

It is clear that $\A_2$ is a normal subgroup of $\A_1$, and $\A_1$
is a normal subgroup of $\A$.

The ring $\C[X]$ can be viewed as the subalgebra of
$\C[x,x^{-1},z,t]$, generated by $x$, $z$, $t$ and
$(z^2+t^3+x)/x^2$. In particular, it contains $\C[x,z,t]$ as a
subring.

The following proposition can be deduced from the results of
Makar-Limanov concerning the set of locally nilpotent derivations
on $X$. See \cite{ML} and  \cite{Freudenburg}.

\begin{prop}\label{aut-group} The automorphism group  $\Aut(X)\cong\Aut(\C[X])$ is isomorphic to the group $\A$. The isomorphism of $\Aut(\C[X])$ to $\A$ is induced by restriction of any automorphism of $\C[X]$ to the subalgebra $\C[x,z,t]$.
\end{prop}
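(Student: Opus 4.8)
The plan is to show that the restriction map $\rho:\Aut(\C[X])\to\A$, $\sigma\mapsto\sigma|_{\C[x,z,t]}$, is a well-defined group isomorphism. There are four things to check: that $\rho$ is well defined (each $\sigma$ preserves $\C[x,z,t]$ and restricts to an element of $\A$), that it is a homomorphism (immediate), that it is injective, and that it is surjective. Throughout I identify $\C[X]$ with the subalgebra of $\C[x,x^{-1},z,t]$ generated by $x,z,t$ and $y=-(z^2+t^3+x)/x^2$, so that $x^2y=-(z^2+t^3+x)$.

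First I would pin down the action on $x$. Since $\ML(\C[X])=\C[x]$ is intrinsic, every $\sigma$ maps $\C[x]$ onto $\C[x]$, whence $\sigma(x)=\lambda x+\mu$ with $\lambda\in\C^*$. Geometrically $\sigma$ covers the affine map $a\mapsto\lambda a+\mu$ on the base of the fibration $x:X\to\ba^1$; as the closed fibre over $a$ is isomorphic to $\ba^2$ for $a\neq 0$ but equals the singular surface $\mathrm{Spec}(\C[z,t,y]/(z^2+t^3))$ for $a=0$, this unique singular fibre must be preserved, forcing $\mu=0$. Hence $\sigma(x)=\lambda x$, and in particular $\sigma$ preserves the ideal $(x)$.

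The heart of the matter is that $\sigma$ preserves the subalgebra $\C[x,z,t]$, and I expect importing the needed input from Makar-Limanov's analysis to be the main obstacle. Here I would invoke the description of the locally nilpotent derivations of $\C[X]$ from \cite{ML} and \cite{Freudenburg}: the two Jacobian derivations $\partial$ (with $\partial x=\partial t=0$, $\partial z=x^2$, $\partial y=-2z$) and $\partial'$ (with $\partial' x=\partial' z=0$, $\partial' t=-x^2$, $\partial' y=3t^2$) have kernels $\C[x,t]$ and $\C[x,z]$, and the key consequence of Makar-Limanov's work is that the kernel of \emph{every} nonzero locally nilpotent derivation is contained in $\C[x,z,t]$. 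Consequently the subalgebra generated by all such kernels (the Derksen invariant) is exactly $\C[x,z,t]$. Since conjugation by $\sigma$ carries locally nilpotent derivations to locally nilpotent derivations and sends $\ker\partial$ to $\ker(\sigma\partial\sigma^{-1})$, the map $\sigma$ permutes the collection of all kernels and therefore preserves the subalgebra they generate; thus $\sigma(\C[x,z,t])=\C[x,z,t]$.

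It then remains to see that $\varphi:=\sigma|_{\C[x,z,t]}$ lies in $\A$ and that $\rho$ is bijective. For membership in $\A$, observe that $z^2+t^3+x=-x^2y$ in $\C[X]$, so $I\cdot\C[X]=x^2\C[X]$ and in fact $I=\C[x,z,t]\cap x^2\C[X]$; as $\sigma$ preserves both $\C[x,z,t]$ and $x^2\C[X]=(\lambda x)^2\C[X]$, it preserves $I$, while preservation of $(x)$ was established above. Injectivity is immediate: if $\varphi=\id$ then $\sigma$ fixes $x,z,t$, and from $x^2\sigma(y)=\sigma(-(z^2+t^3+x))=-(z^2+t^3+x)=x^2y$ together with the fact that $\C[X]$ is a domain with $x\neq 0$ one gets $\sigma(y)=y$, so $\sigma=\id$. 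For surjectivity, given $\varphi\in\A$ one has $\varphi(x)=\lambda x$ for some $\lambda\in\C^*$ (a generator of $(x)$) and $\varphi(z^2+t^3+x)\in I\subseteq x^2\C[X]$, say $\varphi(z^2+t^3+x)=x^2h$ with $h\in\C[X]$; defining $\sigma$ by $x\mapsto\lambda x$, $z\mapsto\varphi(z)$, $t\mapsto\varphi(t)$, $y\mapsto-\lambda^{-2}h$ gives a well-defined endomorphism of $\C[X]$ (it kills $P$) restricting to $\varphi$, and applying the same construction to $\varphi^{-1}$ yields a two-sided inverse by the injectivity argument. Hence $\sigma\in\Aut(\C[X])$ with $\rho(\sigma)=\varphi$, completing the proof.
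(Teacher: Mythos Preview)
Your proof is correct and follows essentially the same route as the paper: use $\ML(\C[X])=\C[x]$ together with the singularity of the fibre $x=0$ to get $\sigma(x)=\lambda x$; use Makar-Limanov's description of kernels of LNDs to see that $\C[x,z,t]$ is $\sigma$-stable; identify $I=\C[x,z,t]\cap x^2\C[X]$ to get stability of $I$; and construct the inverse map by explicitly extending $\varphi\in\A$ to $\C[X]$ via a formula for $\sigma(y)$. The only cosmetic difference is that you package the subalgebra-preservation step as ``the Derksen invariant of $\C[X]$ equals $\C[x,z,t]$,'' whereas the paper argues more directly (picking a specific LND with $z$, resp.\ $t$, in its kernel and conjugating); both rely on the same input from \cite{ML}, namely that $\ker\partial\subset\ker\partial^2\subset\C[x,z,t]$ for every nonzero LND $\partial$. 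One small caveat: your assertion $\ker\partial=\C[x,t]$ (and similarly for $\partial'$) is stronger than what you actually need and is not explicitly in \cite{ML}; for the Derksen-invariant argument it suffices that $\C[x,t]\subset\ker\partial\subset\C[x,z,t]$, which is immediate. Also, the equality $I=\C[x,z,t]\cap x^2\C[X]$ that you invoke is proved in the paper by an explicit normal-form computation; you may wish to include that verification rather than assert it.
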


\begin{proof}
 In \cite{ML}, it was shown that the Makar-Limanov
invariant of $\C[X]$ is $\C[x]$. In fact, more was proven. It was
shown that for any locally nilpotent derivation $\partial$ of
$\C[X]$, we have that $\ker\partial^2\subset \C[x,z,t]$. There
exists a locally nilpotent derivation $\partial_1$ on $\C[X]$  such
that $\partial_1(z)=0$. Thus for any automorphism $\varphi$ of
$\C[X]$, $\varphi\circ\partial_1\circ\varphi^{-1}$ has $\varphi(z)$
in its kernel. In particular, $\varphi(z)\in\C[x,z,t]$. Also $\varphi^{-1}(z)$ belongs to $\C[x,z,t]$, and therefore, $z=\varphi(\varphi^{-1}(z))$ is in the image $\varphi(\C[x,z,t])$. The same
holds for $t$. Thus we have shown that any automorphism of $\C[X]$
restricts to an automorphism of the subalgebra $\C[x,z,t]$ of
$\C[X]$. 
Also, any automorphism $\varphi\in\C[X]$ stabilizes the ideal $(x)$. Indeed, since the Makar-Limanov invariant $\C[x]\subset\C[X]$  is stable, there exists $\lambda\in\C^*$ and $b\in\C$ such that $\varphi(x)=\lambda x+b$. Also, the zero set of $\lambda x+b$ in $X$ is singular if and only if $b=0$. Thus, since the zero set of $x$ in $X$ is singular, we have that $b=0$, and thus the ideal $(x)$ is preserved.

Now, to show that the ideal $I$ is preserved, we show that
$x^2\C[X]\cap\C[x,z,t]=I$. It is clear that $I$ is contained in
the intersection. For the converse, note that any element in
$\C[X]$ is represented in a unique way as a polynomial of the form
$yf_0(y,z,t)+xyf_1(y,z,t)+g(x,z,t)$, where $f_0,f_1\in\C[y,z,t]$
and $g\in\C[x,z,t]$. (In other words, all monomials containing
$x^2y$ are eliminated.) If we look now at the ideal generated by
$x^2$, $x^2yf_0+x^3yf_1\in\C[x,z,t]$ if and only if $f_0$ and
$f_1$ are independent of $y$, and in this case,
$$x^2yf_0+x^3yf_1=x^2y(f_0+xf_1)=(-z^2-t^3-x)(f_0+xf_1)\in
(z^2+t^3+x)\subset\C[x,z,t].$$

Finally, we show that any automorphism $\varphi$ in $\A$ extends
to a unique automorphism $\tilde{\varphi}$ of $\C[X]$. To prove uniqueness, note that any element of $\A$ induces a unique automorphism of $\C[x,x^{-1},z,t]$, which contains $\C[X]$. To prove existence, we
extend $\varphi$ to an endomorphism $\Phi$ of $\C[x,y,z,t]$, such
that $\Phi(P)$ is in the ideal generated by $P$. More precisely,
we define $\Phi(h)=h$ if $h\in\C[x,z,t]$, and we construct
$\Phi(y)$ as follows. By hypothesis, there exists
$f,g\in\C[x,z,t]$ such that
$\varphi(z^2+t^3+x)=(z^2+t^3+x)f+x^2g$. Suppose also that
$\varphi(x)=\lambda x$, with $\lambda\in\C^*$. Now we pose
$\Phi(y)=(yf-g)/\lambda^2$. One checks easily that $\Phi(P)=fP$.
Thus $\Phi$ induces a homomorphism $\tilde{\varphi}$ from $\C[X]$
to $\C[X]$. We will show it is an automorphism. Let $\psi$ be the
inverse of $\varphi$ as an automorphism of $\C[x,z,t]$. Construct
$\Psi$ and $\tilde{\psi}$ as above. Then
$\tilde{\psi}\circ\tilde{\varphi}$ and
$\tilde{\varphi}\circ\tilde{\psi}$ are homomorphisms of $\C[X]$
which extend to the identity as homomorphisms of
$\C[x,x^{-1},z,t]$. Thus $\tilde{\psi}$ is the inverse of
$\tilde{\varphi}$.
\end{proof}

\begin{rem}
This result has the following geometric interpretation. The
inclusion $\C[x,z,t]\subset\C[X]$ induces a dominant morphism
$\sigma:X\to \ba^3$. Any automorphism of $X$ is the lifting by
$\sigma$ of a unique automorphism of $\ba^3$. More precisely, if
$\tilde{\varphi}$ is an automorphism of $X$, there is a unique
automorphism ${\varphi}$ of $\ba^3$ such that
${\varphi}\circ\sigma=\sigma\circ\tilde{\varphi}$. Also, an
automorphism $\varphi$ of $\ba^3$ has a lifting as an automorphism
of $X$ if and only if ${\varphi}$ preserves the ideals $(x)$ and
$I$.
\end{rem}

We will now discuss the structure of the group $\A$. Note that it
contains a subgroup isomorphic to $\C^*$, (corresponding to a
$\C^*$-action on $X$) given by the $\C^*$-action where $x$ has
weight 6, $z$ has weight 3 and $t$ has weight 2. .
\begin{prop} $\A=\A_1\rtimes\C^*$.
\end{prop}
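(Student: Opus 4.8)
The plan is to realize $\A=\A_1\rtimes\C^*$ as an \emph{internal} semidirect product, where the $\C^*$-factor is the torus $T=\{h_s\}_{s\in\C^*}$ with $h_s\colon x\mapsto s^6x,\ z\mapsto s^3z,\ t\mapsto s^2t$. Since we are already told that $\A_1$ is normal in $\A$, it suffices to verify three things: (1) $T$ is a subgroup of $\A$ isomorphic to $\C^*$; (2) $\A_1\cap T=\{\id\}$; and (3) $\A=\A_1\cdot T$. Item (1) is immediate, since $h_s$ scales each of $x^2$ and $z^2+t^3+x$ by a unit, hence preserves $(x)$ and $I$, and $h_s=\id$ forces $s^2=s^3=1$, i.e. $s=1$. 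Item (2) is equally direct: an $h_s\in\A_1$ must fix $x$ (so $s^6=1$) and reduce to $\id$ modulo $(x)$ (so $s^3=s^2=1$), whence $s=1$. All the content is therefore in the surjectivity statement (3).

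For (3), fix $\varphi\in\A$. Preservation of $(x)$ forces $\varphi(x)=\lambda x$ for some $\lambda\in\C^*$, because $(\varphi(x))=\varphi((x))=(x)$ and the units of $\C[x,z,t]$ are the nonzero constants. Reducing modulo $(x)$ yields an automorphism $\bar\varphi$ of $\C[z,t]=\C[x,z,t]/(x)$, and since $\varphi$ preserves $I$ whose image in $\C[z,t]$ is $(z^2+t^3)$, we get $\bar\varphi(z^2+t^3)=c\,(z^2+t^3)$ for a unit $c\in\C^*$. The crux—the rigidity of the cuspidal cubic—is that every such $\bar\varphi$ is a torus element:
\[\bar\varphi(z)=a^3z,\qquad \bar\varphi(t)=a^2t\qquad(a\in\C^*,\ c=a^6).\]

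I would prove this rigidity by a degree argument. Put $n=\deg\bar\varphi(z)$ and $m=\deg\bar\varphi(t)$. In the identity $\bar\varphi(z)^2+\bar\varphi(t)^3=c(z^2+t^3)$ the right-hand side has degree $3$, while the summands on the left have degrees $2n$ and $3m$. If $2n\neq 3m$ there is no cancellation of top terms, so $\max(2n,3m)=3$, forcing $n=m=1$. If instead $2n=3m$, then $(n,m)=(3j,2j)$ for some $j\geq 1$, so $n>m$ and $m\nmid n$, which is impossible for an automorphism of $\C[z,t]$ by the Jung--van der Kulk theorem (the multidegree $(3j,2j)$ does not occur). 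Hence $n=m=1$, and inspecting the cubic, quadratic and linear parts of the identity shows $\bar\varphi(z)=a^3z$, $\bar\varphi(t)=a^2t$. Excluding the multidegree $(3j,2j)$ is the main obstacle, and van der Kulk is the essential external input.

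It remains to relate $\lambda$ to $a$ and to assemble the decomposition. Writing $\varphi(z^2+t^3+x)=h_2\,(z^2+t^3+x)+h_1x^2$ with $h_1,h_2\in\C[x,z,t]$ and reducing modulo $(x)$ gives $h_2\equiv a^6\pmod{(x)}$; evaluating the same identity at $z=t=0$ and comparing the coefficient of $x$ then yields $\lambda=a^6$. Now set $s=a$ and $\varphi_1=\varphi\circ h_s^{-1}$. Then $\varphi_1$ preserves $I$, fixes $x$ (since $\varphi_1(x)=s^{-6}\lambda x=x$), and reduces to the identity modulo $(x)$ (since $\bar\varphi=\bar h_s$ by the rigidity statement), so $\varphi_1\in\A_1$ and $\varphi=\varphi_1 h_s\in\A_1\cdot T$. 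This establishes (3); combined with (1), (2) and the normality of $\A_1$, it gives $\A=\A_1\rtimes\C^*$, the factorization $\varphi=\varphi_1 h_s$ being unique by (2).
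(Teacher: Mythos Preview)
Your proof is correct. The overall architecture matches the paper's---reduce modulo $(x)$, show the induced automorphism of $\C[z,t]$ lies in the torus, and conclude---but the rigidity step is handled differently. The paper observes that an automorphism of $\C[z,t]$ fixing $z^2+t^3$ must preserve every fiber $V(z^2+t^3+c)$; for $c\neq 0$ this is a once-punctured elliptic curve, whose automorphism group (fixing the puncture) has order~$6$ and consists exactly of the torus elements with $s^6=1$. You instead run a degree argument and invoke Jung--van der Kulk to exclude the multidegree $(3j,2j)$. Both are clean; the paper's route is more geometric and avoids the structure theorem for $\Aut(\C^{[2]})$, while yours is purely algebraic and perhaps more self-contained for readers unfamiliar with elliptic curves. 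Your treatment is also more explicit on one point the paper leaves implicit: the verification that $\lambda=a^6$, which is precisely what guarantees that after composing with $h_a^{-1}$ the resulting automorphism fixes $x$ (and not merely sends it to a scalar multiple), so that it genuinely lands in $\A_1$.
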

\begin{proof}
It is clear that $\A_1\rtimes\C^*$ is a subgroup of $\A$. We will
now show that $\A_1$ and $\C^*$ generate $\A$. First note that if
$\varphi\in\A$, then, since $\varphi$ preserves the ideal $(x)$,
it induces an automorphism $\overline\varphi$ of
$\C[x,z,t]/(x)\cong\C[z,t]$. Also, since $I$ is preserved, the
ideal $(z^2+t^3)$ is preserved by $\overline\varphi$.  By
composing with an automorphism in $\C^*$, we can assume that
$\overline\varphi(z^2+t^3)=z^2+t^3$. In particular, for all
$c\in\C$, $\overline\varphi$ induces an automorphism of
$V(z^2+t^3+c)$. If $c\not=0$, this defines a smooth elliptic curve
$E$ with one point $p$ removed. The group of automorphisms of this
affine curve is the group of automorphisms of $E$ which fix the
point $p$. This group is of order 6, generated by the automorphism
that fixes $t$ and sends $z$ to $-z$, and the automorphism that
fixes $z$ and sends $t$ to $e^{i2\pi/3}t$ (see, for example,
\cite{Hartshorne}). There are therefore only 6 automorphisms of
$\C[z,t]$ which fixes the polynomial $z^2+t^3$, and they are all
in the image of $\C^*$. We can therefore suppose that
$\overline\varphi(z)=z$ and $\overline\varphi(t)=t$. This means
exactly that $\varphi\in\A_1$.
\end{proof}

Now we are left with the problem of understanding the group
$\A_1$. For this part, we will consider a more general situation.
First note that the group $\A_1$ is exactly the group of
automorphisms $\varphi$ of $\C[x,z,t]$ which fix $x$, such that
$\varphi\equiv\id\mod (x)$, and such that $\varphi(z^2+t^3)\in
(x^2, z^2+t^3)$.

\begin{note} Let $r\in\C[z,t]$ be a polynomial. Denote by $\A_1(r)$ the group

$$\A_1(r)=\{\varphi\in\Aut_{\C[x]}(\C[x][z,t])\mid\varphi\equiv\id\mod (x), \varphi(r)\in(x^2,r)\}.$$
\end{note}
We  have thus that $\A_1=\A_1(z^2+t^3)$, and that, for any $r$,
$\A_2$ is a normal subgroup of $\A_1(r)$.

We use the following standard notation for partial derivatives. If
$h\in\C[z,t]$, $h_z=\partial h/\partial z$, $h_t=\partial h/
\partial t$, and if $h,f\in\C[z,t]$, then the Poisson bracket of $h$ and $f$ is
given by $\{h,f\}=h_zf_t-h_tf_z$.

\begin{prop} Let $r\in\C[z,t]$ be a polynomial with
no multiple irreducible factor and  such that the zero set
$V(r)\in\ba^2$ is connected. Then,  for every
automorphism there exists $\varphi\in\A_1(r)$, a polynomial $\alpha\in\C[z,t]$
such that $\varphi(z)\equiv z+x(r\alpha)_t\mod (x^2)$ and
$\varphi(t)\equiv t-x(r\alpha)_z\mod (x^2)$.

Moreover, $\theta:\A_1(r)\to\C[z,t]$, $\varphi\mapsto\alpha$ is a
surjective group homomorphism whose kernel is $\A_2$. In particular, the
quotient group $\A_1(r)/\A_2$ is isomorphic to the additive group
$(\C[z,t], +)$.\end{prop}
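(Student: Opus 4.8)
The plan is to extract everything from the first-order behaviour in $x$. Given $\varphi\in\A_1(r)$, write $\varphi(z)=z+xa+O(x^2)$ and $\varphi(t)=t+xb+O(x^2)$ with $a,b\in\C[z,t]$, which is possible since $\varphi$ fixes $x$ and $\varphi\equiv\id\bmod (x)$. As $\varphi$ is an automorphism of the polynomial ring $\C[x][z,t]$ over $\C[x]$, its Jacobian $\varphi(z)_z\varphi(t)_t-\varphi(z)_t\varphi(t)_z$ is a unit of $\C[x,z,t]$, hence a nonzero constant; being $\equiv 1\bmod (x)$ it is exactly $1$. Expanding the Jacobian to first order in $x$ yields $a_z+b_t=0$. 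Because the polynomial $1$-form $-b\,\diff z+a\,\diff t$ is then closed, the polynomial Poincaré lemma on $\ba^2$ provides $H\in\C[z,t]$ with $a=H_t$ and $b=-H_z$. Finally, Taylor-expanding $\varphi(r)=r+x(ar_z+br_t)+O(x^2)$ and using $\varphi(r)\in(x^2,r)$ forces $ar_z+br_t\in(r)$, which upon substituting $a=H_t,\ b=-H_z$ reads $\{r,H\}\in(r)$.

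The core of the existence statement is then a lemma: since $\{r,rg\}=r\{r,g\}$, the bracket $\{r,-\}$ preserves the ideal $(r)$ and induces a $\C$-derivation $\delta$ of $\C[z,t]/(r)$, and I claim $\Ker\delta=\C$. Granting this, $\{r,H\}\in(r)$ means $\delta(\bar H)=0$, so $\bar H$ is constant, i.e. $H=r\alpha+c$ for some $\alpha\in\C[z,t]$; then $a=(r\alpha)_t$ and $b=-(r\alpha)_z$, as required, and $\alpha$ is unique because $r$ is nonconstant. To prove $\Ker\delta=\C$ I would argue on the curve $C=V(r)$: the field $\xi_r=r_t\der_z-r_z\der_t$ satisfies $\xi_r(r)=0$, so it is tangent to $C$, and by the Jacobian criterion together with the reducedness of $r$ it is nonvanishing on the smooth locus $C_{\mathrm{sm}}$, hence spans each tangent line there. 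If $\delta(\bar H)=0$ then $\xi_r(H)$ vanishes on $C$, so the differential of $H|_C$ vanishes at every smooth point; in characteristic zero this makes $H|_C$ constant on each irreducible component of $C$, and the connectedness of $V(r)$ forces these constants to agree. This is precisely where the two hypotheses (no multiple irreducible factor, and $V(r)$ connected) enter.

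For the homomorphism assertion, $\theta$ is well defined by the uniqueness of $\alpha$. Reading off the coefficient of $x$ in a composition gives additivity of the first-order terms: if $\varphi(z)\equiv z+xa_\varphi$ and $\psi(z)\equiv z+xa_\psi\bmod(x^2)$, then, using $\varphi(x)=x$ and $\varphi\equiv\id\bmod(x)$, one gets $(\varphi\circ\psi)(z)\equiv z+x(a_\varphi+a_\psi)\bmod(x^2)$ and similarly for $t$; uniqueness then yields $\theta(\varphi\circ\psi)=\theta(\varphi)+\theta(\psi)$. Moreover $\theta(\varphi)=0$ means $a=b=0$, that is $\varphi\equiv\id\bmod(x^2)$, which is exactly $\varphi\in\A_2$; conversely $\A_2\subset\A_1(r)$ since $\varphi(r)-r\in(x^2)$ for such $\varphi$. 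Hence $\Ker\theta=\A_2$, and it remains only to prove surjectivity to conclude $\A_1(r)/\A_2\cong(\C[z,t],+)$.

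Surjectivity is the point I expect to be hardest: every $\alpha$ must arise from a genuine polynomial automorphism, and the Hamiltonian field $\xi_{r\alpha}$ is in general not locally nilpotent, so a naive exponential does not terminate. The key reduction is that the condition $\varphi(r)\in(x^2,r)$ constrains only the coefficients of $x^0$ and $x^1$, since all higher-order terms already lie in $(x^2)$; consequently any Jacobian-$1$ automorphism $\varphi_x$ of $\ba^2$, polynomial in $x$ with $\varphi_0=\id$ and first-order part $\xi_{r\alpha}$, lies automatically in $\A_1(r)$, because $\xi_{r\alpha}(r)=\{r,r\alpha\}=r\{r,\alpha\}\in(r)$, and then $\theta(\varphi_x)=\alpha$. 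So it suffices to realize every Hamiltonian field as the first-order part of such an arc. For $H=p(z)$ the shear $t\mapsto t-xp'(z)$ works; conjugating it by a constant element of $SL_2$ realizes $H=p(\ell)$ for an arbitrary linear form $\ell$; and since the powers $(az+bt)^n$ span $\C[z,t]$, a general $H$ is a finite sum $\sum_i p_i(\ell_i)$, whose corresponding arcs compose to an arc with first-order part $\sum_i\xi_{p_i(\ell_i)}=\xi_H$. Taking $H=r\alpha$ finishes the proof. The subtlety to flag here is exactly that the elementary building blocks need not preserve $(x^2,r)$ and therefore leave $\A_1(r)$; only the composite, by virtue of $r\mid H$, returns to it.
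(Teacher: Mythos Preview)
Your argument is correct. The extraction of $\alpha$ from the first-order data, the geometric proof that $\{r,H\}\in(r)$ forces $H$ to be constant on $V(r)$, and the verification that $\theta$ is a homomorphism with kernel $\A_2$ all coincide with the paper's proof. The genuine difference is in surjectivity. The paper simply notes that the prescribed first-order data $z\mapsto z+x(r\alpha)_t$, $t\mapsto t-x(r\alpha)_z$ define a Jacobian-one automorphism of $R[z,t]$ over $R=\C[x]/(x^2)$ and then invokes the lifting theorem of van den Essen, Maubach and V\'en\'ereau \cite{E-M-V} to obtain a preimage in $\Aut_{\C[x]}(\C[x][z,t])$. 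Your route is explicit and elementary: write $r\alpha$ as a finite sum $\sum_i p_i(\ell_i)$ of univariate polynomials in linear forms (using the Vandermonde fact that the powers $(az+bt)^n$ span the degree-$n$ forms), realize each summand by an $SL_2$-conjugate of a triangular shear over $\C[x]$, and compose. This buys a self-contained construction with no external citation, at the cost that the intermediate factors lie outside $\A_1(r)$---a point you correctly flag. In effect you have reproved, by hand, the special case of \cite{E-M-V} that the paper needs.
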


\begin{proof} For any  $\varphi\in\A_1(r)$, we have
that $\varphi(z)\equiv z+xf \mod (x^2)$ and $\varphi(t)\equiv t+xg
\mod (x^2)$, where $f,g\in\C[z,t]$. By hypothesis, $\varphi$ is an
automorphism. Therefore, its Jacobian equals one. This implies in
particular that $f_z+g_t=0$. In other words, there exists
$h\in\C[z,t]$ such that $h_t=f$ and $h_z=-g$.

Now consider $\varphi(r)\equiv r+x(\{r,h\}) \mod (x^2)$. Since
$\varphi(r)\in(r,x^2)$, the Poisson bracket $\{r,h\}$ is in the
ideal $(r)$. This implies that there exists a constant $c\in\C$
such that $h-c\in(r)$. To see this, note that $\diff h\wedge\diff
r$ is identically zero along the zero set $V(r)$ of $r$. Thus $h$
is locally constant as a function on $V(r)$ in a neighborhood of
every smooth point of $V(r)$. Since $r$ has no multiple
irreducible factor, the set of smooth points is dense. Since
$V(r)$ is connected, $h$ is constant along $V(r)$.

We may assume that the constant $c=0$. Thus $h=r\alpha$, with
$\alpha\in\C[z,t]$, and $\varphi(z)\equiv z+x(r\alpha)_t\mod
(x^2)$ and $\varphi(t)\equiv t-x(r\alpha)_z\mod (x^2)$.

It is easy to check that $\theta:\A_1(r)\to\C[z,t]$,
$\varphi\mapsto\alpha$ is a group homomorphism, whose kernel is
$\A_2$. We now prove that it is surjective. For any
$\alpha\in\C[z,t]$ define an automorphism
$\overline{\varphi}\in\Aut_RR[z,t]$ where $R=\C[x]/(x^2)$, given
by $\overline\varphi(z)=z+x(r\alpha)_t$ and
$\overline\varphi(t)=t-x(r\alpha)_z$. Note that the inverse of
$\overline\varphi$ is given by
$\overline\varphi^{-1}(z)=z-x(r\alpha)_t$ and
$\overline\varphi^{-1}(t)=t+x(r\alpha)_z$. Also,
$\overline\varphi$ is indeed an automorphism, and its Jacobian is
$1$. By a result of van den Essen, Maubach and V\'en\'ereau,
\cite{E-M-V}, there exists an automorphism $\varphi$ of
$\C[x][z,t]$ which projects to $\overline\varphi$. By
construction, $\varphi\in\A_1(r)$ and $\theta(\varphi)=\alpha$.
\end{proof}

\section{Extensions of automorphisms}

In this section, we will continue with the more general setting in
order to prove the lemma \ref{A1-ext} below. We then will apply
the lemma to the hypersurface $X$.

\begin{note} Let $r\in\C[z,t]$ be a polynomial with
no multiple irreducible factor, whose zero set is connected, and
let $F$ be any polynomial in $\C[x,z,t]$. We define
$P_{r,F}=x^2y+r+xF\in\C[x,y,z,t]$, and we let $X_{r,F}=V(P_{r,F})$.
Thus, for example, $X=X_{z^2+t^3,1}$, and
$Y=X_{z^2+t^3,(1+z^2+t^3+x)}$.
\end{note}

As in the proof of proposition \ref{aut-group},  for any
$\varphi\in\A_1(r)$, we can construct an endomorphism $\Phi$ of
$\C[x,y,z,t]=\C^{[4]}$ which induces a unique automorphism
$\tilde\varphi$ of $\C[X_{r,F}]$ as follows. $\Phi$ is an
extension of $\varphi$ where we determine $\Phi(y)$.  Suppose that
$\theta(\varphi)=\alpha$. Let $\beta=\{r,\alpha\}$. Then, it is
easily checked that $\varphi(r+xF)\equiv (1+x\beta)(r+xF)\mod
(x^2)$. Therefore, there exists a unique $G\in\C[x,z,t]$ such that
$\Phi(P_{r,F})=(1+x\beta)P_{r,F}$ if we pose
$\Phi(y)=(1+x\beta)y+G$. We will denote by $\tilde\varphi$ the
induced automorphism on $\C[X_{r,F}]$. In this way, $\A_1(r)$ can
be considered as a subgroup of $\Aut(\C[X_{r,F}])$. We will now
show that any such automorphism of $X_{r,F}$ lifts to an
automorphism of $\ba^4$. This is clear for the case that
$\beta=0$, since in this case, $\Phi$ is an automorphism of
$\C^{[4]}$. In particular, any automorphism of $\A_2$ induces an
automorphism of $X_{r,F}$ which extends. However, even if
$\beta\not=0$, we will show that by adding an appropriate multiple
of $P_{r,F}$, we can lift $\tilde\varphi$ to an automorphism of
$\C[x,y,z,t]$.

\begin{lem}\label{A1-ext}
Let $\varphi\in\A_1(r)$, then $\tilde\varphi$, the corresponding
automorphism of $\C[X_{r,F}]$,  lifts to an automorphism of
$\C^{[4]}$.
\end{lem}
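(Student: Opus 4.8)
The plan is to reduce the statement to a single ``leading order'' construction and then to build the lift by correcting the endomorphism $\Phi$ from the discussion above by a suitable multiple of $P_{r,F}$. First I would observe that the automorphisms of $\C[X_{r,F}]$ admitting a lift to $\Aut(\C^{[4]})$ form a subgroup $H$ of $\A_1(r)$: lifts can be composed and inverted. The key reduction is that $\A_2\subseteq H$. Indeed, if $\varphi\in\A_2$ then $\theta(\varphi)=0$, so $\beta=\{r,\theta(\varphi)\}=0$; the associated endomorphism then has $\Phi(y)=y+G$ and $\Phi(P_{r,F})=P_{r,F}$, and since it fixes $x$, restricts to the automorphism $\varphi$ of $\C[x,z,t]$, and is linear in $y$ with unit leading coefficient, it is already an automorphism of $\C^{[4]}$ lifting $\tilde\varphi$. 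Because $\theta\colon\A_1(r)\to(\C[z,t],+)$ is a surjective homomorphism with kernel $\A_2$ and $\A_2\subseteq H$, one has $H=\A_1(r)$ as soon as $\theta(H)=\C[z,t]$: given any $\varphi$, choose $h\in H$ with $\theta(h)=\theta(\varphi)$, so that $\varphi h^{-1}\in\A_2\subseteq H$. Thus it suffices to produce, for each $\alpha\in\C[z,t]$, a single automorphism of $\ba^4$ whose restriction to $\C[X_{r,F}]$ lies in $\A_1(r)$ with $\theta$-value $\alpha$.

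For such an $\alpha$, I would start from the endomorphism $\Phi$ with $\Phi|_{\C[x,z,t]}=\varphi$, $\Phi(y)=(1+x\beta)y+G$ and $\Phi(P_{r,F})=(1+x\beta)P_{r,F}$, where $\beta=\{r,\alpha\}$. Its Jacobian determinant is $\pm(1+x\beta)$ times the $(z,t)$-Jacobian of $\varphi$, hence $\pm(1+x\beta)$, which is not a unit of $\C^{[4]}$ when $\beta\neq 0$; this is precisely the obstruction to invertibility. A short computation shows that this cannot be repaired by altering $\Phi(y)$ alone by a multiple of $P_{r,F}$ while keeping the images of $x,z,t$ fixed, since the offending term $x\beta$ is of degree one in $x$ whereas any multiple of $P_{r,F}$ affects the $y$-coefficient only through $x^2\C[x,z,t]$. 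The correction must therefore move $z$ and $t$ as well. The enabling identity is $x^2y\equiv -(r+xF)\pmod{P_{r,F}}$: adding multiples of $P_{r,F}$ to the images of all four variables leaves the induced map on $\C[X_{r,F}]$ equal to $\tilde\varphi$ (so descent to the hypersurface and the value $\theta=\alpha$ are automatic), while allowing one to trade the unwanted $y$-terms for elements of $\C[x,z,t]$ and to restore a constant Jacobian.

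The main obstacle will be to turn this corrected endomorphism $\hat\Phi$ into an \emph{honest} automorphism rather than merely an endomorphism with constant Jacobian. Here I would use the reduction of the first paragraph to match $\Phi$ only modulo $\A_2$, i.e. to leading order in $x$, and exploit that $1+x\beta$ becomes a unit of $\C[x]/(x^2)$ with inverse $1-x\beta$; consequently the map induced modulo $x^2$ on $(\C[x]/(x^2))[y,z,t]$ is already invertible, its inverse being built from $\varphi^{-1}\in\A_1(r)$, which has $\theta$-value $-\alpha$. Applying the van den Essen--Maubach--V\'en\'ereau lifting theorem, exactly as in the proof of the preceding proposition but now to the three-variable ring $\C[x][y,z,t]$, should produce an automorphism of $\C^{[4]}$ realizing this leading-order Hamiltonian map, after which a final composition with an element of $\A_2\subseteq H$ absorbs the higher-order discrepancy and yields a lift of $\tilde\varphi$ itself. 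I expect the delicate point to be checking that the lifted map respects the ideal $(P_{r,F})$, equivalently that the leading-order area-preserving automorphism of the $(z,t)$-plane extends compatibly with the relation $x^2y+r+xF=0$; this is where the hypotheses on $r$ (no multiple factor, connected zero set), already used to produce $\alpha$, will be needed.
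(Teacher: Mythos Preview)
Your reduction is sound and matches the paper: the liftable automorphisms form a subgroup $H$, one has $\A_2\subseteq H$ because then $\Phi$ itself is an automorphism of $\C^{[4]}$, and since $\theta$ is surjective with kernel $\A_2$ it suffices to exhibit, for each $\alpha$, one element of $H$ with $\theta$-value $\alpha$. The paper finishes exactly this way, reducing to the auxiliary $\tilde\varphi_0$ and absorbing the difference via $\varphi_0^{-1}\circ\varphi\in\A_2$.

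The gap is in your construction of that single lift. You propose to apply the van den Essen--Maubach--V\'en\'ereau theorem to the three-variable ring $\C[x][y,z,t]$ to lift the reduction of $\Phi$ modulo $x^2$. This runs into two problems. First, the Jacobian of $\Phi$ with respect to $(y,z,t)$ is $(1+x\beta)$ (the $(z,t)$-Jacobian of $\varphi$ equals $1$), which is a unit of $(\C[x]/(x^2))[y,z,t]$ but is \emph{not} equal to $1$; EMV, as used in the paper, requires Jacobian $1$. Second---and this is the point you yourself flag as ``delicate'' without resolving---even if one produced some lift to $\Aut(\C^{[4]})$, EMV gives no control whatsoever over what that automorphism does to the ideal $(P_{r,F})$. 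Adding multiples of $P_{r,F}$ to the images of the variables changes nothing modulo $x^2$ (since $P_{r,F}\equiv r+xF$), so this device cannot correct the Jacobian, and it does not help guide an arbitrary EMV lift back to the hypersurface.

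The paper bypasses both issues by a different mechanism. It introduces an auxiliary parameter $c$ and applies EMV in the \emph{two}-variable ring $\C[x,c][z,t]$: the map $\overline\phi$ of $(\C[x,c]/(x^2))[z,t]$ given by $z\mapsto\varphi(z)+xc\,\alpha_t$, $t\mapsto\varphi(t)-xc\,\alpha_z$ has Jacobian exactly $1$, so EMV yields $\phi\in\Aut_{\C[x,c]}(\C[x,c][z,t])$. Specializing $c$ gives a polynomial family $\varphi_c\in\A_1(r+c)$, hence induced automorphisms $\tilde\varphi_c$ of $\C[X_{r+c,F}]$. The key step you are missing is the formal substitution $c\mapsto -P_{r,F}$ (a trick from \cite{MJ-P}): it converts this family into a genuine automorphism $\Psi$ of $\C^{[4]}$ that \emph{automatically} preserves $(P_{r,F})$ and restricts to $\tilde\varphi_0$. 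No separate verification that ``the lifted map respects the ideal $(P_{r,F})$'' is needed; the construction forces it.
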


\begin{proof} Let $\varphi\in \A_1(r)$, and suppose that $\theta(\varphi)=\alpha$.  Similarly to the method used  in \cite{MJ-P}, we create a family of endomorphisms of $\ba^4$ each one  restricting to an automorphism of a fiber of $P_{r,F}$.
Consider $c$ as a variable, and denote by $R_c$ the ring
$R_c=\C[x,c]/(x^2)$. Consider now the automorphism
$\overline{\phi}\in\Aut_{R_c}(R_c[z,t])$ given by
$\overline\phi(z)=\varphi(z)+xc\alpha_t$, and
$\overline\phi(t)=\varphi(t)-xc\alpha_z$. One checks easily that
the Jacobian of $\overline\phi$ is 1, and therefore, by the result
of van den Essen, Maubach and V\'en\'ereau, \cite{E-M-V}, there
exists an automorphism $\phi\in\Aut_{\C[c,x]}(\C[x,c][z,t])$ which
restricts to $\overline\phi$.

 For each $c\in\C$, denote by
$\varphi_c\in\Aut_{\C[x]}(\C[x][z,t])$ the automorphism defined by
$\varphi_c(z)=\phi(z)$ and $\varphi_c(t)=\phi(t)$, where $\phi(z)$
and $\phi(t)$ are viewed as polynomials with coefficients in
$\C[c]$. Note that $\varphi_c\in\A_1(r+c)$ and that the expression
for $\varphi_c$ depends polynomially on $c$.

For each $c\in\C$, we now construct, similarly to above, an
automorphism $\tilde{\varphi_c}$ on $\C[X_{r+c,F}]$. Note that
the expression for $\tilde{\varphi_c}$ depends polynomially on
$c$. By making a formal substitution of $c$ by $-P_{r,F}$, we
construct an automorphism $\Psi=\tilde{\varphi}_{(-P_{r,F})}$ of
$\C[x,y,z,t]$ which preserves the ideal $(P_{r,F})$. (See
\cite{MJ-P}, lemma 3.4). Note that $\Psi$ is a lift of the
automorphism $\tilde{\varphi_0}\in\Aut(\C[X_{r,F}])$. Also,
$\varphi_0$ and $\varphi$ are equivalent modulo $(x^2)$. More
precisely, $\varphi_0^{-1}\circ\varphi$ is an element of $\A_2$.
By the comment before the lemma, $\varphi_0^{-1}\circ\varphi $ induces an automorphism $\widetilde{\varphi_0^{-1}\circ\varphi}$ which lifts to an automorphism of $\C^{[4]}$. By the unicity of the extension of an element of $\A_1(r)$  to an automorphism of $\C[X_{r,F}]$, we have that $\widetilde{\varphi_0^{-1}\circ\varphi}=\tilde{\varphi_0}^{-1}\circ\tilde\varphi$. Since $\tilde{\varphi_0}$  also lifts to an automorphism of $\C^{[4]}$, the same is true for $\tilde{\varphi}$.
\end{proof}

\begin{thm}\label{X-ext} Every automorphism of $X=V(P)$ extends to an automorphism of $\ba^4$.
\end{thm}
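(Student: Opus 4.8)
The plan is to exploit the structure theorems already established, so that only two families of generators need to be treated, each of which has essentially been dealt with earlier. By Proposition \ref{aut-group} we identify $\Aut(X)\cong\A$ via restriction to $\C[x,z,t]$, and by the decomposition $\A=\A_1\rtimes\C^*$ every automorphism of $X$ is a product of an element coming from $\A_1=\A_1(z^2+t^3)$ and an element of the $\C^*$-subgroup. The key organizing observation is that the collection $G\subset\Aut(X)$ of automorphisms that \emph{do} extend to $\ba^4$ forms a subgroup: if $\Xi_1,\Xi_2$ are automorphisms of $\ba^4$ restricting to $\sigma_1,\sigma_2\in\Aut(X)$, then $\Xi_1\circ\Xi_2$ restricts to $\sigma_1\circ\sigma_2$ and $\Xi_1^{-1}$ restricts to $\sigma_1^{-1}$. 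Hence it suffices to show that $G$ contains the two subgroups $\A_1$ and $\C^*$, since together they generate $\A\cong\Aut(X)$.

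First I would dispose of the $\C^*$-part by an explicit diagonal extension. The $\C^*$-action on $X$ is the one where $x,z,t$ carry weights $6,3,2$; assigning $y$ the weight $-6$, the diagonal endomorphism of $\ba^4$ given by $(x,y,z,t)\mapsto(\lambda^6x,\lambda^{-6}y,\lambda^3z,\lambda^2t)$ is an automorphism of $\ba^4$ satisfying $P\mapsto\lambda^6P$. It therefore stabilizes $X=V(P)$ and restricts to the corresponding element of the $\C^*$-subgroup of $\A$. Thus every element of $\C^*$ lies in $G$.

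Next I would handle $\A_1$ by a direct appeal to Lemma \ref{A1-ext}. The point is simply to recognize that $X=V(P)$ with $P=x^2y+(z^2+t^3)+x$ is exactly the hypersurface $X_{r,F}$ for $r=z^2+t^3$ and $F=1$, and that $r=z^2+t^3$ has no multiple irreducible factor with $V(r)\subset\ba^2$ connected, so the hypotheses of Lemma \ref{A1-ext} are met. Consequently, for every $\varphi\in\A_1$ the induced automorphism $\tilde\varphi\in\Aut(\C[X])$ lifts to an automorphism of $\C^{[4]}$, i.e.\ $\A_1\subset G$. Combining the two inclusions, $G$ contains the generators $\A_1$ and $\C^*$ of $\A$, so $G=\Aut(X)$, which is the assertion.

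In truth the substantive work has all been front-loaded into Lemma \ref{A1-ext}; the only genuine content remaining is the bookkeeping that $X$ fits the template $X_{r,F}$ and that $G$ is a subgroup. The step I would watch most carefully is the compatibility of the abstract identification $\Aut(X)\cong\A$ of Proposition \ref{aut-group} with the concrete lifts produced for $\A_1$ and $\C^*$: one must check that the $\tilde\varphi$ built in the construction preceding Lemma \ref{A1-ext} is the \emph{same} automorphism of $\C[X]$ that corresponds to $\varphi\in\A_1$ under Proposition \ref{aut-group}, which follows from the uniqueness of the extension of an element of $\A$ to $\C[X]$ (both restrict to $\varphi$ on $\C[x,z,t]$). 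Granting this, no further obstruction arises and the theorem follows.
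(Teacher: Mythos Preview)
Your proposal is correct and follows essentially the same approach as the paper's proof: decompose $\A=\A_1\rtimes\C^*$, extend the $\C^*$-part by the diagonal action, and invoke Lemma~\ref{A1-ext} for $\A_1$. Your version is simply more explicit (the subgroup observation, the weight $-6$ for $y$, the identification $X=X_{z^2+t^3,1}$, and the uniqueness check matching $\tilde\varphi$ with the automorphism from Proposition~\ref{aut-group}), but none of this diverges from the paper's argument.
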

\begin{proof}
The automorphism group of $X$ is isomorphic to
$\A=\A_1\rtimes\C^*$. The automorphisms in $\C^*$ extend, and, by
lemma \ref{A1-ext},  the automorphisms in $\A_1$ extend. Therefore
all automorphisms extend to  automorphism of $\ba^4$.
\end{proof}

\begin{example}
Consider the automorphism $\varphi$ of $\C[x,z,t]$ given by
$\varphi(x)=x$, $\varphi(z)=z+3xt^5 $ and
$\varphi(t)=t+2x(z+3xt^5)^3$. It is indeed an automorphism, since
it is the composition of two triangular automorphisms.

Also we can check that $\varphi$ is an element of $\A_1$. It is
obvious that $\varphi(x)=x$ and $\varphi\equiv\id\mod (x)$;  we
will now show that $\varphi(z^2+t^3+x)$ is in the ideal
$(x^2,z^2+t^3+x)$. Indeed, we find that there exists an element
$G\in\C[x,z,t]$ such that
$\varphi(z^2+t^3+x)=(z^2+t^3+x)+x(6zt^5+6t^2z^3)+x^2G$. This
yields : $\varphi(z^2+t^3+x)=(z^2+t^3+x)(1+6xzt^2)+x^2(G-6zt^2)$.

Thus $\varphi$ is an element of $\A_1$. To find the corresponding
automorphism of $\C[X]$, we extend $\varphi$ to the automorphism
$\tilde{\varphi}$ of $\C[X]$  where
 $\tilde\varphi(y)=(1+6xzt^2 )y-(G-6zt^2)$.

In order to lift this automorphism to an automorphism of
$\C^{[4]}$, we apply the procedure above.

We have that $\alpha=(t^3-z^2)/2$. We define, for each $c\in\C$, an automorphism $\varphi_c\in\A_1(z^2+t^3+c)$ as follows.
$\varphi_c(z)=z+3xt^2(t^3+c/2)$, and $\varphi_c(t)=t+2x\varphi_c(z)(\varphi_c(z)^2+c/2)$. More precisely, we have that $\varphi(z)\equiv z+x((z^2+t^3+c)\alpha)_t \mod (x^2)$, and $\varphi(t)\equiv t-x((z^2+t^3+c)\alpha)_z \mod (x^2)$.

Now, we can define for each $c\in\C$, an automorphism
$\tilde{\varphi_c}$ of $\C[X_{z^2+t^3+c,1}]$ if we pose
$\tilde\varphi_c(y)=(1+6xzt^2)y+G_c$, for a suitable polynomial
$G_c\in\C[x,z,t,c]$. Finally, to find the automorphism of
$\C[x,y,z,t]$ which is a lift of $\tilde\varphi$, we make a formal
substitution of $c$ by $-P$.
\end{example}

\begin{cor}\label{fix-origin} The origin $o=(0,0,0,0)\in X$ is fixed by all automorphisms of $X$.
\end{cor}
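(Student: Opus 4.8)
The plan is to deduce this immediately from the two main structural results already established, namely Theorem \ref{X-ext} and part (ii) of Proposition \ref{prop:extension}. The origin $o=(0,0,0,0)$ lies on $X$ since $P(0,0,0,0)=0$, so the statement makes sense, and the strategy is simply to realize an arbitrary automorphism of $X$ as the restriction of an ambient automorphism of $\ba^4$ and then invoke the fixed-point statement for such ambient automorphisms.

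Concretely, I would argue as follows. Let $\tilde\varphi$ be an arbitrary automorphism of $X$. By Theorem \ref{X-ext}, $\tilde\varphi$ extends to an automorphism $\Xi$ of $\ba^4$; that is, there exists $\Xi\in\Aut(\ba^4)$ with $\Xi(X)=X$ and $\Xi|_X=\tilde\varphi$. In particular $\Xi$ is an algebraic automorphism of $\ba^4$ which restricts to an automorphism of $X$, so Proposition \ref{prop:extension}(ii) applies and yields $\Xi(0,0,0,0)=(0,0,0,0)$. Since $o\in X$ and $\tilde\varphi$ is the restriction of $\Xi$ to $X$, we conclude $\tilde\varphi(o)=\Xi(o)=o$. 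As $\tilde\varphi$ was arbitrary, every automorphism of $X$ fixes $o$.

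The argument carries essentially no independent difficulty: all the substantive work has been done beforehand. The one point that must be checked---and which is exactly what Theorem \ref{X-ext} supplies---is that the extension $\Xi$ genuinely restricts to the given $\tilde\varphi$, so that fixing the origin as a point of $\ba^4$ is the same as fixing $o$ as a point of $X$. The geometric reason the origin is the distinguished fixed point is already contained in the proof of Proposition \ref{prop:extension}: the origin is the unique singular point of $W_P=V(P-x)$ whose tangent cone is a double hyperplane, and this invariant feature is preserved by any ambient automorphism stabilizing $W_P$. Thus the only thing to ensure is that the logical chain ``automorphism of $X$ $\Rightarrow$ ambient automorphism of $\ba^4$ $\Rightarrow$ fixes origin'' is assembled correctly, which is immediate from the two cited results.
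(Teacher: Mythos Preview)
Your proof is correct and follows exactly the same approach as the paper: extend an arbitrary automorphism of $X$ to an automorphism of $\ba^4$ via Theorem \ref{X-ext}, then apply Proposition \ref{prop:extension}(ii) to conclude that the origin is fixed. The additional commentary you include about the tangent cone of $W_P$ is accurate but not needed for the argument itself.
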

\begin{proof}
By proposition \ref{prop:extension} (ii), any automorphism of $X$
which extends to $\ba^4$ fixes the origin. By theorem \ref{X-ext},
all automorphisms of $X$ extend to $\ba^4$. \end{proof}

\begin{rem*} This corollary was first proven in collaboration with G. Freudenburg using a different method.
\end{rem*}

\section{Inequivalent hypersurfaces}

Consider now the two hypersurfaces $X=V(P)$ and $Y=V(Q)$. (As
before, $P=x^2y+z^2+x+t^3)$, and $Q=x^2y+(1+x)(z^2+x+t^3)$). We
know from theorem \ref{main thm} that as abstract varieties, $X$
and $Y$ are isomorphic. We now show the following result
\begin{thm} $X$ and $Y$ are inequivalent as hypersurfaces of $\C^4$.
\end{thm}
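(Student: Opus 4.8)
The plan is to argue by contradiction and play the three structural results already established against one another. So suppose $X$ and $Y$ \emph{were} equivalent, i.e.\ there were an automorphism $\Xi$ of $\ba^4$ with $\Xi(X)=Y$; equivalently, $\Xi$ restricts to an isomorphism between $X$ and $Y$. The key observation is that two \emph{a priori} different isomorphisms $X\to Y$ would then be available: the ambient one $\Xi|_X$, and the intrinsic one $\phi$ from theorem \ref{main thm}. Composing them produces an automorphism of $X$ to which the rigidity results of section 3 apply.

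Concretely, I would form $\alpha=\phi^{-1}\circ(\Xi|_X)\colon X\to X$, which is an automorphism of $X$ since $\phi\colon X\to Y$ is an isomorphism. By corollary \ref{fix-origin}, every automorphism of $X$ fixes the origin $o=(0,0,0,0)$, so $\alpha(o)=o$, that is, $\Xi(o)=\phi(o)$. Next I would compute $\phi(o)$: since $\phi$ is induced by $(x,y,z,t)\mapsto(x,(1+x)y,z,t)$ and $y$ vanishes at $o$, one gets $\phi(o)=(0,0,0,0)=o$ (this is precisely the remark already used in the proof of theorem \ref{main thm}(b)). Hence $\Xi(o)=o$.

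This contradicts proposition \ref{prop:extension}(i), which asserts that any automorphism of $\ba^4$ restricting to an isomorphism between $X$ and $Y$ must move the origin, $\Xi(o)\neq o$ (in fact it sends $o$ to $(0,-1,0,0)$). Therefore no such $\Xi$ exists, and $X$ and $Y$ are inequivalent as hypersurfaces of $\ba^4$.

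I do not expect a genuine obstacle in this final step: all the real work is already contained in theorem \ref{X-ext} (every automorphism of $X$ extends to $\ba^4$) and in the tangent-cone computation behind proposition \ref{prop:extension}, which together force automorphisms of $X$ to fix $o$ while forcing any ambient $X\to Y$ isomorphism to send $o$ to $(0,-1,0,0)$. The only point requiring care is the compatibility check $\phi(o)=o$, which guarantees that the fixed-point property of $\alpha$ transfers verbatim to $\Xi$; had $\phi$ failed to fix the origin, one would instead have to track the image $\phi(o)$ and compare it with the image prescribed by proposition \ref{prop:extension}(i). Here the two coincide at $o$, so the contradiction is immediate.
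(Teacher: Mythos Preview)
Your argument is correct and essentially identical to the paper's: both assume an ambient automorphism taking $X$ to $Y$, combine it with the explicit isomorphism $\phi$ to produce an automorphism of $X$, and derive a contradiction between corollary \ref{fix-origin} (every automorphism of $X$ fixes $o$) and proposition \ref{prop:extension}(i) (any ambient isomorphism $X\to Y$ moves $o$). The only cosmetic difference is that the paper composes as $(\Psi^{-1})|_Y\circ\phi$ whereas you compose as $\phi^{-1}\circ(\Xi|_X)$, but these yield inverse automorphisms of $X$ and the contradiction is the same.
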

\begin{proof}
Suppose there were an automorphism $\Psi$ of $\ba^4$ such that
$\Psi(X)=Y$. Then $\Psi(o)\not=o$ by proposition
\ref{prop:extension} (i). Consider now the isomorphism  $\phi$
defined in theorem \ref{main thm} between $X$ and $Y$. Then
$(\Psi^{-1})|_Y\circ\phi$ is an automorphism of $X$ which does not
preserve $o$. This contradicts the corollary \ref{fix-origin}.
\end{proof}

It should be noted that, as another consequence of the description
of the automorphism group of $X$, we can show that all
automorphisms of $\C[Y]$ which fix the variable $x$ also extend to
automorphisms of $\ba^4$.

This is the case, since
$\Aut(Y)\cong\Aut(X)\cong\A=\A_1\rtimes\C^*$. The subgroup of
automorphisms which fix $x$ (for $X$ or for $Y$) corresponds via this isomorphism to the subgroup  $\A_1\rtimes
C_6\subset\A_1\rtimes\C^*$, where $C_6$ is the subgroup of the sixth roots of unity in
$\C^*$.  The automorphisms corresponding to elements of  $\A_1$ extend to $Y$ by lemma
\ref{A1-ext}, and the automorphisms corresponding to elements in $C_6$ extend to linear
automorphisms on $\ba^4$.

However, the action of $\C^*$ on $Y$ does not extend to an action
on $\ba^4$. More precisely, for any $\lambda\in\C$ such that
$\lambda^6\not=1$, the action of $\lambda\in\C^*$ on $X$,
conjugated by $\phi$ to give an automorphism of $Y$, does not
extend to an automorphism of $\ba^4$. To see this, note that the
action of $\lambda$ does not fix the line $x=z=t=0$. The only
fixed point on this line is the origin. However, by proposition
\ref{prop:extension}, (iii), the point $(0,-1,0,0)$ must be fixed.

\section{Remarks and some open questions}

\subsection{Locally nilpotent derivations on $\C[X]$}
We can give a complete description of the locally nilpotent
derivations of $\C[X]$.  We denote by $\LND(\C[X])$ the set of
locally nilpotent derivations of $\C[X]$.  Denote by
$\LND_x(\C[x][z,t])$ the $\C[x]$-module of locally nilpotent
derivations of $\C[x,z,t]$ having $x$ in the kernel. If $\partial$
is  a locally nilpotent derivation on $\C[X]$, it restricts to a unique locally
nilpotent derivation of $\LND_x(\C[x][z,t])$.
\begin{prop} $\LND(\C[X])=x^2(\LND_x(\C[x][z,t]))$.
\end{prop}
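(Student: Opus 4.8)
My plan is to study the restriction map $\rho\colon\LND(\C[X])\to\LND_x(\C[x][z,t])$ sending $\partial$ to $\partial|_{\C[x,z,t]}$. By the remark recalled immediately before the proposition — a consequence of Makar-Limanov's computation, which gives $\partial(x)=0$ and $\ker\partial^2\subseteq\C[x,z,t]$ for every $\partial\in\LND(\C[X])$ — this map is well defined. It is moreover injective: applying $\partial$ to $x^2y+z^2+x+t^3=0$ and using $\partial(x)=0$ yields $\partial(y)=-\bigl(2zD(z)+3t^2D(t)\bigr)/x^2$, where $D=\rho(\partial)$, so $\partial$ is recovered from $D$. It therefore suffices to prove that the image of $\rho$ equals $x^2\,\LND_x(\C[x][z,t])$.

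For the inclusion $\supseteq$ I would take $D'\in\LND_x(\C[x][z,t])$, put $\delta=x^2D'$, and extend it to the localization $\C[x,x^{-1},z,t]\supseteq\C[X]$. The formula above gives $\delta(y)=-\bigl(2zD'(z)+3t^2D'(t)\bigr)\in\C[x,z,t]$, so $\delta$ carries each generator $x,z,t,y$ of $\C[X]$ into $\C[x,z,t]\subseteq\C[X]$ and hence preserves $\C[X]$. Since the $\delta$-nilpotent elements of $\C[X]$ form a subalgebra containing these generators ($x,z,t$ because $\delta|_{\C[x,z,t]}=x^2D'$ is locally nilpotent, and $y$ because $\delta(y)\in\C[x,z,t]$), the derivation $\delta$ is locally nilpotent, and $\rho(\delta)=x^2D'$.

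The heart of the argument is the inclusion $\subseteq$: for $\partial\in\LND(\C[X])$ the restriction $D=\rho(\partial)$ must be divisible by $x^2$, and here I would exploit that $\partial(y)\in\C[X]$. The identification $\C[X]/x\C[X]\cong\C[y,z,t]/(z^2+t^3)$ shows $x\C[X]\cap\C[x,z,t]=(x,z^2+x+t^3)$; since $h:=2zD(z)+3t^2D(t)=-x^2\partial(y)$ lies in $x\C[X]\cap\C[x,z,t]$, its reduction satisfies $\bar h\in(z^2+t^3)\subseteq\C[z,t]$. Because $\partial(x)=0$, the ideal $(x)$ is $D$-invariant, so $D$ descends to $\bar D\in\LND(\C[z,t])$ with $\bar D(z^2+t^3)=\bar h\in(z^2+t^3)$; thus $\bar D$ leaves invariant the ideal of the cuspidal cubic $C=\{z^2+t^3=0\}$. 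I would then invoke the decisive fact that a cuspidal cubic admits no nontrivial $\ga$-action: an invariant irreducible $C$ would be either contained in the fixed locus of $\bar D$ (a union of lines) or equal to the closure of a single orbit $\cong\ba^1$, both impossible for an irreducible singular cubic, forcing $\bar D=0$. Hence $x\mid D(z),D(t)$, and $D_1:=x^{-1}D$ again lies in $\LND_x(\C[x][z,t])$ (it is locally nilpotent because $D=xD_1$ with $x\in\ker D_1$ gives $x^nD_1^n(f)=D^n(f)=0$ for $n\gg0$, whence $D_1^n(f)=0$).

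To extract the second factor of $x$ I would repeat this step with $D_1$. Now $h=xh_1$ with $h_1=D_1(z^2+t^3)$, and $\partial(y)=-h_1/x$, so $h_1=-x\partial(y)\in x\C[X]\cap\C[x,z,t]=(x,z^2+x+t^3)$; thus $\bar D_1$ again preserves $(z^2+t^3)$ and so vanishes, giving $x\mid D_1(z),D_1(t)$ and $D=x^2D_2$ with $D_2\in\LND_x(\C[x][z,t])$. This establishes $\subseteq$ and finishes the identification $\LND(\C[X])=x^2\,\LND_x(\C[x][z,t])$. I expect the main obstacle to be the step upgrading invariance of the cuspidal cubic to the vanishing of the reduced derivation, i.e.\ the absence of nontrivial $\ga$-actions on $C$, together with the correct bookkeeping of which Laurent elements of $\C[x,x^{-1},z,t]$ belong to $\C[X]$ (encapsulated in $x\C[X]\cap\C[x,z,t]=(x,z^2+x+t^3)$); the division-by-$x$ and local-nilpotence verifications are routine by comparison.
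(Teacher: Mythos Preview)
Your argument is correct and follows essentially the same strategy as the paper's proof: restrict $\partial$ to $\C[x,z,t]$, reduce modulo $x$ to obtain an LND on $\C[z,t]$ that stabilizes the cuspidal cubic $z^2+t^3=0$, conclude that this reduction vanishes, and iterate once more to extract the second factor of $x$; the inclusion $\supseteq$ is handled in both cases by the explicit formula $\partial(y)=-\partial_0(z^2+t^3)$. Your version is somewhat more explicit (you spell out injectivity of $\rho$, the intersection $x\C[X]\cap\C[x,z,t]=(x,z^2+t^3+x)$, and the local-nilpotence check), whereas the paper appeals directly to $\partial(z^2+t^3)\in x^2\C[X]$ and the earlier computation $x^2\C[X]\cap\C[x,z,t]=(x^2,z^2+t^3+x)$; but the core ideas coincide.
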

\begin{proof} If $\partial=x^2\partial_0$ is an element of $x^2(\LND_x(\C[x][z,t])$, then one can extend it to a locally nilpotent derivation on $\C[X]$ by posing $\partial(y)=-\partial_0(z^2+t^3)$. For the converse, if $\partial$ is a locally nilpotent derivation on $\C[X]$, then $\partial(z^2+t^3)=2z\partial(z)+3t^3\partial(t)\in (x^2)$. Consider the derivation $\overline\partial$ of $\C[z,t]$ defined by $\overline\partial(f)\equiv\partial(f)\mod(x)$. Then $\overline\partial$ induces an action of $(\C,+)$ on $\ba^2$ which stabilizes the cuspidal curve $z^2+t^3=0$. This implies that the action is trivial, and therefore that $\overline\partial=0$. In other words, there exists an element $\partial_1\in\LND_x(\C[x][z,t])$ satisfying $\partial=x\partial_1$. Now since  $\partial(z^2+t^3)\in(x^2)$, we have that $\partial_1(z^2+t^3)$ belongs to $(x)$; and the same argument proves that there exists $\partial_0$ such that $\partial_1=x\partial_0$.
\end{proof}

\subsection{Non-zero fibers of $P$ and $Q$}

\begin{prop}
The following hold:
\begin{itemize}
\item[a)] For every $c\in\mathbb{C}$, $V(Q-c)$ is isomorphic to
the hypersurface $V(F_c)=Z_c$ of $\mathbb{A}^{4}$  defined by the
equation
$$F_c=x^2y+z^2+(1+c)x+t^3-c=0.$$

\item[b)] For every $c\in\C\setminus\{-1, 0\}$ and every
$c'\in\C\setminus\{0\}$, $Z_c$ and $V(P-c')$  are isomorphic as
abstract affine varieties.
\end{itemize}
\end{prop}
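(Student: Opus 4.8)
The plan is to prove the two parts separately, using explicit maps in the spirit of the proof of Theorem \ref{main thm}. Throughout I would record first the algebraic identity $F_c=P-c(1-x)$, which is the reason $F_c$ is the ``right'' normal form: indeed $P-c(1-x)=x^2y+z^2+x+t^3-c+cx=x^2y+z^2+(1+c)x+t^3-c$.

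For part (a) I would reuse the endomorphism $\Psi$ of $\ba^4$ introduced in the proof of Theorem \ref{main thm}, namely $\Psi\colon(x,y,z,t)\mapsto(x,(1-x)y-x-z^2-t^3,z,t)$. Since $\Psi^*$ fixes $x$ and satisfies $\Psi^*(P)=(1-x)Q$, one gets at once $\Psi^*(F_c)=\Psi^*(P)-c(1-x)=(1-x)(Q-c)$. Hence $\Psi$ restricts, for \emph{every} $c$ by the same formula, to a morphism $V(Q-c)\to V(F_c)=Z_c$ of schemes over $\ba^3={\rm Spec}(\C[x,z,t])$. To produce the inverse I would set $\Phi_c\colon(x,y,z,t)\mapsto(x,(1+x)y+c,z,t)$, again fixing $x,z,t$; using $(1+x)x^2y+(1+x)(z^2+x+t^3)=(1+x)P$ together with $P=F_c+c(1-x)$, a short computation gives $\Phi_c^*(Q-c)=(1+x)F_c$, so $\Phi_c$ restricts to a morphism $Z_c\to V(Q-c)$. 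It then remains to check that $\Psi$ and $\Phi_c$ are mutually inverse; since both are morphisms over $\ba^3$ fixing $x,z,t$, it suffices to verify the two congruences $(\Psi^*\circ\Phi_c^*)(y)\equiv y\pmod{Q-c}$ and $(\Phi_c^*\circ\Psi^*)(y)\equiv y\pmod{F_c}$. Both reduce, after substituting the defining relation of the relevant hypersurface, to $y=y$; this is the single genuinely computational step, and it is precisely where the correcting constant $+c$ in $\Phi_c$ is forced. This completes (a).

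For part (b) I would argue in two independent moves, each of which is the restriction of an automorphism of $\ba^4$. First, the diagonal linear automorphism $L$ with $L^*(x)=(1+c)^{-1}x$, $L^*(y)=(1+c)^2y$, $L^*(z)=z$, $L^*(t)=t$ — well defined exactly because $c\neq-1$ — satisfies $L^*(F_c)=P-c$, whence $Z_c\cong V(P-c)$. Second, the $\C^*$-action $\lambda\cdot(x,y,z,t)=(\lambda^6x,\lambda^{-6}y,\lambda^3z,\lambda^2t)$ makes $P$ homogeneous of weight $6$, so $\lambda$ carries $V(P-c)$ isomorphically onto $V(P-\lambda^6c)$; choosing $\lambda$ with $\lambda^6=c'/c$, which is possible exactly because $c,c'\neq0$, gives $V(P-c)\cong V(P-c')$. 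Composing yields $Z_c\cong V(P-c')$ for all $c\notin\{-1,0\}$ and all $c'\neq0$.

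I do not expect a serious obstacle: both parts come down to writing the correct explicit maps, and the excluded values are exactly the ones where these maps degenerate, namely $c=-1$ (which kills the linear term of $F_c$ and so forbids the rescaling of $x$) and $c=0$ or $c'=0$ (the special contractible fiber that the weight-$6$ $\C^*$-scaling cannot reach). The only step demanding real care is the verification in part (a) that $\Psi$ and $\Phi_c$ compose to the identity modulo the respective ideals; everything else is bookkeeping with the explicit formulas, and the fact that $\Psi$ is a non-invertible endomorphism of $\ba^4$ (its Jacobian is $1-x$) is harmless here, since we only claim abstract isomorphisms of the fibers.
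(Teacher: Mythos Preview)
Your proposal is correct and follows essentially the same approach as the paper: the two endomorphisms you use in part (a) are exactly the paper's $\Phi_c$ and $\Psi_c$ (with the names swapped), and in part (b) your two moves---the linear rescaling of $(x,y)$ and the weighted $\C^*$-action---are precisely the ones the paper uses. Your explicit identity $F_c=P-c(1-x)$, which explains at a glance why the map $\Psi$ from Theorem~\ref{main thm} works uniformly in $c$, is a nice addition that the paper leaves implicit.
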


\begin{proof}
Recall that by definition \[ V(Q-c)\simeq{\rm
Spec}\left(\mathbb{C}\left[x,y,z,t\right]/\left(Q-c\right)\right)={\rm
Spec}\left(\mathbb{C}\left[x,y,z,t\right]/\left(x^{2}y+\left(1+x\right)\left(z^{2}+t^{3}+x\right)-c\right)\right).\]

We claim that the endomorphisms \[
\begin{cases}
\Phi_{c}: & \left(x,y,z,t\right)\mapsto\left(x,\left(1-x\right)y-z^{2}-x-t^{3},z,t\right)\\
\Psi_{c}: &
\left(x,y,z,t\right)\mapsto\left(x,\left(1+x\right)y+c,z,t\right)\end{cases}\]
of $\mathbb{A}^{4}$ restricts respectively to isomorphisms
$\phi_{c}:V(Q-c)\stackrel{\sim}{\rightarrow}Z_{c}$ and
$\psi_{c}:Z_{c}\stackrel{\sim}{\rightarrow}V(Q-c)$ which are
inverse to each other. Indeed, one checks that
$\Phi_{c}^{*}(F_{c})=\left(1-x\right)\left(Q-c\right)$ whereas
$\Psi_{c}^{*}\left(Q-c\right)=\left(1+x\right)F_{c}$ so that
$\Phi_{c}$ and $\Psi_{c}$ induce morphisms
$\phi_{c}:V(Q-c)\rightarrow Z_{c}$ and $\psi_{c}:Z_{c}\rightarrow
V(Q-c)$ respectively. Since $\phi_{c}$ and $\psi_{c}$ are
morphisms of schemes over $\mathbb{A}^{3}={\rm
Spec}\left(\mathbb{C}\left[x,z,t\right]\right)$, the identities
$(\Phi_{c}^{*}\circ\Psi_{c}^{*})(y)=y-\left(Q-c\right)$ and
$(\Psi_{c}^{*}\circ\Phi_{c}^{*})(y)=y-F_{c}$ guarantee that their
are inverse isomorphisms. This proves assertion a).

Now, note that $V(P-c)\simeq V(P-1)$ for every
$c\in\C\setminus\{0\}$. Indeed, if $c\in\C^*$, consider the
automorphism of $\mathbb{A}^4$ defined by
$\left(x,y,z,t\right)=\left(a^{6}x,a^{-6}y,a^{3}z,a^{2}t\right)$,
for a suitable constant $a\in\C$ such that $a^{-6}=c$. This
automorphism maps $V(P-c)$ isomorphically onto $V(P-1)$.

Finally, assertion b) follows from the fact that for every
$c\in\mathbb{C}\setminus\left\{-1\right\}$, the automorphism of
$\mathbb{A}^{4}$ defined by
$\left(x,y,z,t\right)\mapsto\left(\left(1+c\right)x,\left(1+c\right)^{-2}y,z,t\right)$
 maps $V(P-c)$
isomorphically onto $Z_{c}$.
\end{proof}

 Together with the above discussion, this result motivates the following
 question.

\begin{question}\label{question}
Are the subvarieties of $\mathbb{A}^{4}={\rm
Spec}\left(\mathbb{C}\left[x,y,z,t\right]\right)$ defined by the
equations \[ x^{2}y+z^{2}+x+t^{3}+1=0\qquad\textrm{and}\qquad
x^{2}y+z^{2}+t^{3}+1=0\] isomorphic ?
\end{question}

\begin{rem}
This question has an affirmative answer in holomorphic category. Indeed, one can easily check that the analytic automorphism of $\mathbb{A}^4$ defined by $$(x,y,z,t)\mapsto(x,y+1-\frac{e^x-1-x}{x^2}(z^2+t^3),e^{x/2}z,e^{x/3}t)$$ \noindent induces an isomorphism between the hypersurfaces $V(Q+1)$ and $V(P+1)$.
\end{rem}

\subsection{Holomorphic and stable equivalence }

\indent\newline\noindent Recall that two  closed algebraic smooth
subvarieties $X$ and $Y$ of $\mathbb{A}^{4}$ isomorphic as
abstract algebraic varieties are called holomorphically equivalent
 if there exists a biholomorphism of $\mathbb{A}^{4}$
restricting to a biholomorphism between $X$ and $Y$ considered as
complex manifolds. Similarly, we say that $X$ and $Y$ are stably
equivalent  if there exist $n\in\N$ and an algebraic automorphism of
$\mathbb{A}^{4+n}$ restricting to an isomorphism
between $X\times\mathbb{A}^{n}$ and $Y\times\mathbb{A}^{n}$. Here
we show the following.

\begin{prop}
The subvarieties $X$ and $Y$ of $\mathbb{A}^{4}={\rm
Spec}\left(\mathbb{C}\left[x,y,z,t\right]\right)$ defined by the
equations \[ P=x^{2}y+z^{2}+x+t^{3}=0\qquad\textrm{and}\qquad
Q=x^{2}y+\left(1+x\right)\left(z^{2}+x+t^{3}\right)=0\] are
holomorphically equivalent and stably equivalent.
\end{prop}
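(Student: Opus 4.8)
The plan is to prove the two asserted equivalences separately: the holomorphic one is essentially explicit, whereas the stable one is the genuinely delicate point.

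For the holomorphic equivalence I would keep the isomorphism $\phi$ of Theorem \ref{main thm} in mind and repair its only defect, namely the non-unit factor $1+x$ appearing in $\Phi^{*}(Q)=(1+x)P$. The key observation is that $1+x$ is the order-one Taylor polynomial of the entire, nowhere-vanishing function $e^{x}$, i.e. $e^{x}\equiv 1+x \pmod{x^{2}}$. So I would define the biholomorphism $\Theta$ of $\mathbb{A}^{4}=\mathbb{C}^{4}$ fixing $x,z,t$ and sending
\[
y\longmapsto e^{x}\,y+\frac{e^{x}-1-x}{x^{2}}\,(z^{2}+x+t^{3}).
\]
Here the coefficient $(e^{x}-1-x)/x^{2}$ is entire, since its numerator vanishes to order two at the origin, so $\Theta$ is a genuine triangular biholomorphism of $\mathbb{C}^{4}$, its inverse being obtained by solving the affine relation in $y$. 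A direct substitution, using $x^{2}\cdot\frac{e^{x}-1-x}{x^{2}}+1+x=e^{x}$, gives $\Theta^{*}(Q)=e^{x}P$; as $e^{x}$ is a unit in the ring of entire functions, $\Theta$ carries $V(P)=X$ biholomorphically onto $V(Q)=Y$, which is exactly a biholomorphism of $\mathbb{A}^{4}$ restricting to one between $X$ and $Y$.

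For the stable equivalence the target is an automorphism $\Xi$ of some $\mathbb{A}^{4+n}$ with $\Xi^{*}(Q)=\lambda P$ for a $\lambda\in\mathbb{C}^{*}$: since the units of a polynomial ring are the non-zero constants, producing such a $\Xi$ is precisely what it means for the cylinders $X\times\mathbb{A}^{n}=V(P)$ and $Y\times\mathbb{A}^{n}=V(Q)$ to be equivalent hypersurfaces of $\mathbb{A}^{4+n}$. The strategy, in the spirit of the substitution device of \cite{MJ-P}, is to use the extra coordinates to convert the non-constant multiplier $1+x$ into an honest constant. Two ingredients are available. First, both $\phi$ and $\phi^{-1}$ extend to the polynomial endomorphisms $\Phi,\Psi$ of $\mathbb{A}^{4}$ of Theorem \ref{main thm}, with $\Phi^{*}(Q)=(1+x)P$ and $\Psi^{*}(P)=(1-x)Q$. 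Second, because $\mathbb{C}[x]$ is Euclidean, the matrix $\left(\begin{smallmatrix}1+x&1\\-1&0\end{smallmatrix}\right)\in SL_{2}(\mathbb{C}[x])$ is a product of elementary matrices, so the $\mathbb{C}[x]$-linear substitution $y\mapsto(1+x)y+u$, $u\mapsto -y$ in the coordinate $y$ and a new coordinate $u$ is a composition of shears, hence an automorphism $\mu$ of $\mathbb{A}^{5}$; one computes $\mu^{*}(Q)=(1+x)P+x^{2}u$, which realizes the dilation by $1+x$ at the cost of a single controlled $u$-term.

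The main obstacle is the final correction step: turning $(1+x)P+x^{2}u$ into $\lambda P$ by a further automorphism of $\mathbb{A}^{4+n}$ that uses the new coordinate(s) to absorb \emph{simultaneously} the residual factor $1+x$ and the extra term. I expect this to be the crux, because the rigid summand $x$ inside $w=z^{2}+x+t^{3}$ is insensitive to any rescaling of $z$ and $t$ and so rules out the naive ``scale $w$ by $1+x$'' approach; the correcting automorphism must therefore genuinely intertwine the new variable with $x$ and exploit the hypersurface relation $x^{2}y+w=0$. Concretely, I would build, as in \cite{MJ-P}, a family of automorphisms of $\mathbb{A}^{4}$ depending polynomially on a parameter $c$ (playing the role of the fibre level), and then promote $c$ to a genuine coordinate, the point being that the freedom gained by stabilizing is exactly what is missing in $\mathbb{A}^{4}$, where Proposition \ref{prop:extension} forbids such a $\Xi$. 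Once such a $\Xi$ is produced, we obtain the stable equivalence for the corresponding value of $n$; should a single new coordinate prove insufficient for the correction, allowing several only enlarges the room available for the transvection and substitution arguments.
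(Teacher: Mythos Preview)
Your holomorphic argument is correct and is exactly the biholomorphism the paper writes down.

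Your stable-equivalence argument, however, is genuinely incomplete: you set up the shear $\mu$ with $\mu^{*}(Q)=(1+x)P+x^{2}u$, then explicitly flag ``the final correction step'' as the crux and leave it as a plan (a parametrized family of automorphisms followed by substitution). That is precisely the part that needs to be done, and the sketch you give does not make clear why such a correction exists; in particular it is not obvious how the MJ--P substitution trick would kill the extra $x^{2}u$ while simultaneously removing the factor $1+x$ without reintroducing a non-constant unit elsewhere.

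The paper's route is different and fully explicit, and the idea is worth knowing. Rather than attacking the $y$-slot via an $SL_{2}(\mathbb{C}[x])$ move, it distributes the factor $1+x$ over $z^{2}$ and $t^{3}$ using the first-order Taylor approximations of $(1+x)^{1/2}$ and $(1+x)^{1/3}$: send $z\mapsto(1+\tfrac{1}{2}x)z+x^{2}w$ and $t\mapsto(1+\tfrac{1}{3}x)t+x^{2}w$. Since $(1+\tfrac{1}{2}x)^{2}\equiv 1+x$ and $(1+\tfrac{1}{3}x)^{3}\equiv 1+x\pmod{x^{2}}$, these give $(1+x)z^{2}$ and $(1+x)t^{3}$ up to $x^{2}$-divisible errors, which are then absorbed into $y$ (together with a ``$+1$'' that turns $x$ into $(1+x)x$). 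The sole role of the extra variable $w$ is to render the $\mathbb{C}[x]$-linear substitution on $(z,t,w)$ invertible: one completes it to a matrix in $GL_{3}(\mathbb{C}[x])$, and the $y$-image is triangular over $\mathbb{C}[x,z,t,w]$. One then checks directly that this automorphism of $\mathbb{A}^{5}$ satisfies $\Psi^{*}P=Q$ on the nose (no residual multiplier), giving stable equivalence with $n=1$.
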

\begin{proof}
By virtue of theorem \ref{main thm}, $X$ and $Y$ are isomorphic as
abstract algebraic varieties. Holomorphic equivalence follows from
the observation that the map
$\zeta:\mathbb{A}^{4}\rightarrow\mathbb{A}^{4}$ defined by \[
\zeta\left(x,y,z,t\right)=\left(x,e^{x}y+x^{-2}\left(e^{x}-1-x\right)\left(z^{2}+x+t^{3}\right),z,t\right)\]
is a biholomorphism of $\mathbb{A}^{4}$ which maps $X$
isomorphically onto $Y$. Moreover, remark that $\zeta$ can be
viewed as a holomorphic extension of the isomorphism $\phi:X\to Y$
defined in theorem \ref{main thm}. Indeed, we have
$\zeta(x,y,z,t)=\left(x,\left(1+x\right)y+x^{-2}\left(e^{x}-1-x\right)P,z,t\right)$.

For stable algebraic equivalence, we consider the
$\mathbb{C}\left[x\right]$-endomorphism $\Psi^{*}$ of
$\mathbb{C}\left[x,y,z,t,w\right]$ defined by \[
\begin{cases}
y & \mapsto y+1+x^{-2}\left(\left(\left(1+\frac{1}{2}x\right)z+x^{2}w\right)^2-\left(1+x\right)z^2\right)+x^{-2}\left(\left(\left(1+\frac{1}{3}x\right)t+x^{2}w\right)^3-\left(1+x\right)t^3\right)\\
z & \mapsto\left(1+\frac{1}{2}x\right)z+x^{2}w\\
t & \mapsto\left(1+\frac{1}{3}x\right)t+x^{2}w\\
w &
\mapsto-\frac{3}{4}z+\frac{2}{9}t+\left(1-\frac{5}{6}x\right)w\end{cases}\]
By definition, $\Psi^{*}$ restricts to a linear
$\mathbb{C}\left[x\right]$-endomorphism $\tilde{\Psi}^{*}$ of
$\mathbb{C}\left[x,z,t,w\right]$, which is an automorphism since
the Jacobian is invertible :
 \[
\left(\begin{array}{ccc}
\left(1+\frac{1}{2}x\right) & 0 & x^{2}\\
0 & \left(1+\frac{1}{3}x\right) & x^{2}\\
-\frac{3}{4} & \frac{2}{9} &
\left(1-\frac{5}{6}x\right)\end{array}\right)\in{\rm
GL}_{3}\left(\mathbb{C}\left[x\right]\right).\] Since
$\Psi^{*}(y)$ depends triangularly on the variables $x$, $z$, $t$
and $w$, this implies in turn that $\Psi^{*}$ is a
$\mathbb{C}\left[x\right]$-automorphism of
$\mathbb{C}\left[x,y,z,t,w\right]$. Now one checks easily that
$\Psi^{*}P=Q$, which completes the proof.
\end{proof}

\begin{rem}
In the proof above, we have shown that the isomorphism $\phi:X\to
Y$ -- which cannot be extended to an algebraic automorphism of
$\mathbb{A}^{4}$ (theorem \ref{main thm}) --, can be extended to a
holomorphic automorphism of $\mathbb{A}^{4}$. In particular, there is  no topological obstruction to extending
the isomorphism $\phi$ to an automorphism.
\end{rem}

\section{Locally nilpotent derivations on the cylinder over the
Koras-Russell threefold}

Recently, the first author proved that the Makar-Limanov invariant
of the cylinder over the Koras-Russell threefold is trivial
\cite{Dubouloz08}. The idea of the proof is as follows. Let
$X^0=X\setminus V(t)$. Now consider the polynomial
$P_1=xy+z^2+t^3+x$, and let $X_1=V(P_1)$, and $X_1^0=X_1\setminus
V(t)$.  It is shown that the cylinders $X^0\times\C$ and
$X_1^0\times\C$ are isomorphic. Then one uses the fact that the
Makar-Limanov invariant of $X_1^0$ is trivial to show that the
Makar-Limanov invariant of $X^0\times\C$ is trivial, and this
implies the result. 

We denote by, $A=\C[X]$ is the coordinate ring of $X$, by $B=\C[X_1]$ the coordinate ring of $X_1$, by $A_t$ the coordinate ring of $X^0$, and by $B_t$ the coordinate ring of $X_1^0$. 

In this section we will construct an explicit isomorphism, and
then we will obtain a locally nilpotent derivation $\partial$ on
the coordinate ring $A[w]$ of $X\times\ba^1$ such that
$\partial(x)\neq0$.

\begin{prop}\label{prop:iso-entre-algebres}
The algebraic endomorphism $\Phi$ of $\C[x,y,z,t^{\pm 1},w]$
defined by
$$\left\{\begin{array}{l}
\Phi(x)=x \\
\Phi(y)=xy-xw^2-2zw \\
\Phi(z)=z+xw \\
\Phi(t)=t\\
\Phi(w)=2w+yz+3xyw-3zw^2-xw^3
\end{array} \right. $$
\noindent induces an isomorphism
$$\phi:A_t[w]=\C[x,y,z,t^{\pm 1},w]\slash(x^2y+z^2+x+t^3)
\stackrel{\sim}{\rightarrow}B_t[w]=\C[x,y,z,t^{\pm
1},w]\slash(xy+z^2+x+t^3)$$

\noindent whose inverse isomorphism $\phi^{-1}:B_t[w]\to A_t[w]$
is induced by the endomorphism of $\C[x,y,z,t^{\pm 1},w]$ defined
by
$$\left\{\begin{array}{l}
\Psi(x)=x \\
\Psi(y)=-\frac{1}{t^3}(y+y^2+wz)-\frac{1}{4t^6}(yz-xw)^2 \\
\Psi(z)=z-\frac{1}{2t^3}x(yz-xw) \\
\Psi(t)=t\\
\Psi(w)=\frac{1}{2t^3}(yz-xw).
\end{array} \right. $$
\end{prop}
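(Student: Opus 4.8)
The statement is an explicit computation, so the plan is to reduce it to two finite verifications: first, that $\Phi$ and $\Psi$ respect the defining polynomials and therefore descend to ring homomorphisms between the quotients $A_t[w]$ and $B_t[w]$; and second, that the two descended maps are mutually inverse. A preliminary remark is that both $\Phi$ and $\Psi$ are genuine endomorphisms of $\C[x,y,z,t^{\pm1},w]$: the only denominators occurring in the formulas for $\Psi$ are powers of $t$, which is a unit in this ring, so no division problem arises. Throughout I write $P=x^2y+z^2+x+t^3$ and $P_1=xy+z^2+x+t^3$ for the two defining polynomials, and I set $u=yz-xw$, an auxiliary element that will organize the bookkeeping.

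For the descent step, the key identity is $\Phi(P_1)=P$, which is immediate: substituting $\Phi(y)=xy-xw^2-2zw$ and $\Phi(z)=z+xw$ into $xy+z^2+x+t^3$, the cross terms $\pm 2xzw$ and $\pm x^2w^2$ cancel, and one is left with $x^2y+z^2+x+t^3=P$. Consequently $\Phi$ carries the ideal generated by $P_1$ into the ideal generated by $P$ and hence induces a ring homomorphism between the corresponding quotients. The analogous descent property for $\Psi$, namely that $\Psi(P)$ lies in the ideal $(P_1)$, I do not intend to check by brute force; instead it will come out for free from the inverse computation below, using the elementary observation that the set of $h\in\C[x,y,z,t^{\pm1},w]$ with $\Psi(\Phi(h))\equiv h \pmod{P_1}$ is a subring, so that once the congruence is established on generators it holds for $P_1$ itself, giving $\Psi(P)=\Psi(\Phi(P_1))\equiv P_1\equiv 0\pmod{P_1}$.

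The heart of the proof is checking that the induced maps are inverse, i.e.\ that $\Phi\circ\Psi\equiv\id \pmod{P}$ and $\Psi\circ\Phi\equiv\id \pmod{P_1}$ on the generators $y,z,w$ (the cases of $x$ and $t$ being trivial, as both maps fix them). Here the auxiliary element $u$ does most of the work for $z$ and $w$: a direct expansion gives $\Phi(u)=\Phi(y)\Phi(z)-x\Phi(w)=-2w(x^2y+z^2+x)=-2wP+2t^3w$, hence $\Phi(u)\equiv 2t^3w \pmod{P}$. Since $\Psi(w)=u/(2t^3)$ and $\Psi(z)=z-xu/(2t^3)$, this yields at once $\Phi(\Psi(w))=\Phi(u)/(2t^3)\equiv w$ and $\Phi(\Psi(z))=(z+xw)-x\Phi(u)/(2t^3)\equiv z$ modulo $P$. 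The remaining, and genuinely laborious, check is the $y$-component $\Phi(\Psi(y))\equiv y \pmod{P}$: here $\Psi(y)$ contains the quadratic term $-u^2/(4t^6)$, whose image is controlled again by $\Phi(u)$ (one gets $-\Phi(u)^2/(4t^6)\equiv -w^2$), but one must still expand $\Phi(y)+\Phi(y)^2+\Phi(w)\Phi(z)$ and reduce it modulo $P$. The reverse composition $\Psi\circ\Phi\equiv\id\pmod{P_1}$ is handled symmetrically, with $u$ and the relation $P_1\equiv 0$ playing the analogous roles.

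I expect the main obstacle to be precisely this $y$-component computation: $\Phi(w)$ is cubic in $(z,w)$ and $\Psi(y)$ involves $t^{-6}$ together with a squared term, so the naive expansion is long and error-prone. The way to keep it under control is to expand systematically in terms of $u$ and the monomials $w$ and $w^2$, and to use the relation $P=0$ (respectively $P_1=0$) as early as possible to lower degrees, exactly as the cancellations above for $z$ and $w$ suggest. Once both compositions are shown to be the identity on the generators, they are the identity on all of $\C[x,y,z,t^{\pm1},w]$, and combined with the two descent identities this proves that $\Phi$ and $\Psi$ induce mutually inverse isomorphisms between $A_t[w]$ and $B_t[w]$, as claimed.
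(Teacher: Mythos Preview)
Your plan is correct and follows the same route as the paper: verify that $\Phi(P_1)=P$ and $\Psi(P)\in(P_1)$ so the maps descend, then check on the generators $y,z,w$ that $\Phi\circ\Psi\equiv\id\pmod{P}$ and $\Psi\circ\Phi\equiv\id\pmod{P_1}$. The paper carries this out by writing each composition explicitly as the identity plus a multiple of $P$ (respectively $P_1$); for instance $\Phi\circ\Psi(z)=z+\tfrac{xw}{t^3}P$, which is exactly what your computation $\Phi(u)=-2wP+2t^3w$ yields. Your two small tactical variations---organising the calculation through $u=yz-xw$, and deducing $\Psi(P)\in(P_1)$ a posteriori from $\Psi\circ\Phi\equiv\id\pmod{P_1}$ rather than computing $\Psi(P)=(1-xy\,t^{-3})P_1$ directly---are legitimate and do not change the argument.
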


\begin{proof}
Recall that $P$ denotes the polynomial $P=x^2y+z^2+x+t^3$. Let $S$
be the polynomial defined by $S=xy+z^2+x+t^3$.

$\Phi$ and $\Psi$ induce
 well-defined algebraic morphisms $\phi:A_t[w]\to B_t[w]$ and
 $\psi:B_t[w]\to A_t[w]$. Indeed one checks $\Phi(S)=P$ and $\Psi(P)=(1-xyt^{-3})S$.

 Since $\phi$ and $\psi$ are $\C[x,t^{\pm 1}]$-morphisms, the following equalities prove that $\phi$ and $\psi$ are inverse
 isomorphisms.
$$\Psi\circ\Phi(z)=z\quad;\quad\Psi\circ\Phi(y)=y-\frac{y}{t^3}S\quad;\quad
\Psi\circ\Phi(w)=w+\frac{xyw-y^2z-t^3w}{t^6}S;$$
$$\Phi\circ\Psi(w)=w-\frac{1}{t^3}P\quad;\quad \Phi\circ\Psi(z)=z+\frac{xw}{t^3}P\quad;\quad \Phi\circ\Psi(y)=y-\frac{y-w^2}{t^3}P-\frac{w^2}{t^6}P^2. $$
\end{proof}

\begin{prop}\label{prop:lnd}
Let $\Delta$ be the locally nilpotent derivation on
$\C[x,y,z,t^{\pm 1},w]$ defined by
$$\Delta=t^6\left(-2z\frac{\partial}{\partial x}+(y+1)\frac{\partial}{\partial z}\right).$$ Then, the derivation
$\partial$ of $\C[x,y,z,t^{\pm 1},w]$
 defined by
$$\partial=(\Phi\circ\Delta\circ\Psi)(x)\frac{\partial}{\partial x}+(\Phi\circ\Delta\circ\Psi)(y)\frac{\partial}{\partial y}+(\Phi\circ\Delta\circ\Psi)(z)\frac{\partial}{\partial z}+(\Phi\circ\Delta\circ\Psi)(w)\frac{\partial}{\partial w}$$
\noindent induces a locally nilpotent derivation on
$A[w]=\C[x,y,z,t,w]\slash(x^2y+z^2+x+t^3)$ which does not
contain the variable $x$ in its kernel.
\end{prop}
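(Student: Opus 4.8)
The plan is to realise $\partial$ as the transport of the locally nilpotent derivation $\Delta$ along the isomorphism of Proposition \ref{prop:iso-entre-algebres}, and then to check that this transported operator is in fact already defined over the non-localised ring $A[w]$. As a first step I would verify that $\Delta$ is genuinely a locally nilpotent derivation of $B_t[w]=\C[x,y,z,t^{\pm1},w]/(S)$, where $S=xy+z^2+x+t^3$. Writing $S_x=y+1$ and $S_z=2z$, one gets $\Delta(S)=t^6(-2z(y+1)+(y+1)(2z))=0$, so $\Delta$ preserves the ideal $(S)$ and descends to $B_t[w]$. Local nilpotency is then immediate on generators: $\Delta$ annihilates $y$, $t$, $w$, while $\Delta(z)=t^6(y+1)\in\ker\Delta$ and $\Delta(x)=-2t^6z$ with $\Delta^2(x)=-2t^{12}(y+1)\in\ker\Delta$. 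Since the elements killed by some power of $\Delta$ form a subalgebra containing all generators, $\Delta$ is locally nilpotent.

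Next, because conjugation of a locally nilpotent derivation by an algebra isomorphism is again locally nilpotent, transporting $\Delta$ through the isomorphism $A_t[w]\cong B_t[w]$ of Proposition \ref{prop:iso-entre-algebres} yields a locally nilpotent derivation of $A_t[w]$ whose value on each generator $g$ is the class of $(\Phi\circ\Delta\circ\Psi)(g)$; this is exactly the operator $\partial$ of the statement. In particular $\partial(x)=\Phi(\Delta(\Psi(x)))=\Phi(-2t^6z)=-2t^6(z+xw)$, and $\partial(t)=\Phi(\Delta(\Psi(t)))=\Phi(\Delta(t))=0$.

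The heart of the argument, and the step I expect to be the main obstacle, is to show that $\partial$ does not merely live on the localisation $A_t[w]$ but restricts to $A[w]=\C[x,y,z,t,w]/(P)$: a priori the formulas for $\Psi$ introduce denominators up to $t^{-6}$. The key observation is that $\Delta=t^6(-2z\,\partial/\partial x+(y+1)\,\partial/\partial z)$ differentiates only in $x$ and $z$ and carries an overall factor $t^6$. The elements $\Psi(x),\Psi(y),\Psi(z),\Psi(w)$ lie in $\C[x,y,z,w][t^{-1}]$ with $t$-denominators of order at most $t^{-6}$ (the extreme case being $\Psi(y)$); applying $-2z\,\partial/\partial x+(y+1)\,\partial/\partial z$ leaves these $t$-powers untouched, and the outer factor $t^6$ then clears them, so $\Delta(\Psi(g))\in\C[x,y,z,t,w]$ for every generator $g$. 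Since $\Phi$ fixes $t$ and sends the generators to polynomials, it follows that $(\Phi\circ\Delta\circ\Psi)(g)\in\C[x,y,z,t,w]$ as well. Thus $\partial$ carries the generators $x,y,z,t,w$ of the subalgebra $A[w]\subset A_t[w]$ back into $A[w]$, and therefore restricts to a derivation of $A[w]$.

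Finally I would conclude as follows. As the restriction of a locally nilpotent derivation of $A_t[w]$ to the invariant subalgebra $A[w]$, the derivation $\partial$ is itself locally nilpotent on $A[w]$ (each element of $A[w]$ is already killed by a power of $\partial$ inside $A_t[w]$). It remains to see that $x\notin\ker\partial$, that is, that $\partial(x)=-2t^6(z+xw)$ is nonzero in $A[w]$. Since $P=x^2y+z^2+x+t^3$ has degree one in $y$, every nonzero element of the ideal $(P)\subset\C[x,y,z,t,w]$ has positive degree in $y$; as $t^6(z+xw)$ is a nonzero polynomial of $y$-degree zero, it does not lie in $(P)$, whence $\partial(x)\neq0$ in $A[w]$. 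This completes the proof.
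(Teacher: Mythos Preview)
Your proof is correct and follows essentially the same route as the paper's: verify that $\Delta$ descends to a locally nilpotent derivation of $B_t[w]$, transport it through the isomorphism of Proposition~\ref{prop:iso-entre-algebres} to obtain an LND of $A_t[w]$, check that this derivation preserves the subalgebra $A[w]$, and observe $\partial(x)=-2t^{6}(z+xw)\neq 0$. The one noteworthy difference is that the paper simply asserts (after direct computation) that $\partial(x),\partial(y),\partial(z),\partial(w)\in\C[x,y,z,t,w]$, whereas you supply a conceptual reason: the formulas for $\Psi$ involve at worst $t^{-6}$, the inner operator $-2z\,\partial/\partial x+(y+1)\,\partial/\partial z$ leaves $t$-powers unchanged, and the prefactor $t^{6}$ in $\Delta$ then clears the denominators before $\Phi$ is applied. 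This nicely explains \emph{why} the factor $t^{6}$ is there, rather than treating it as a computational miracle; otherwise the two arguments coincide.
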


\begin{proof}
Note first that $\partial(x),\partial(y),\partial(z)$ and
$\partial(w)$ are all elements of $\C[x,y,z,t,w]$ and so
$\partial$ restricts to a well-defined  derivation on $\C[x,y,z,t,w]$. (For example, one checks that $\partial(x)=-2t^6(z+xw)$).

Secondly, notice that, since $\Delta(xy+z^2+x+t^3)=0$, $\Delta$
induces a locally nilpotent derivation  on $B_t[w]=\C[x,y,z,t^{\pm
1},w]\slash(xy+z^2+x+t^3)$. Therefore,
 in light of proposition \ref{prop:iso-entre-algebres}, on can conclude that
 $\partial$ induces a locally nilpotent derivation $\tilde\partial$ on
 $A_t[w]=\C[x,y,z,t^{\pm 1},w]\slash(x^2y+z^2+x+t^3)$.
 
The inclusion $\C[x,y,z,t,w]\subset\C[x,y,z,t^{\pm 1},w]$ induces an inclusion of $A[w]$ in $A_t[w]$. More precisely,   let $\pi:\C[x,y,z,t^{\pm 1},w]\to A_t[w]$ be the canonical projection. Since $P$ is prime in $\C[x,y,z,t,w]$, one can identify $A[w]$ with the image $\pi(\C[x,y,z,t,w])$ of the subalgebra $\C[x,y,z,t,w]$. We have that $\pi\circ\tilde\partial=\partial\circ\pi$, and therefore $\tilde\partial$ restricts to a locally nilpotent deriviation on $A[w]=\pi(\C[x,y,z,t,w])$.
\end{proof}

\begin{cor}
The Makar-Limanov invariant of $A[w]$ is trivial.
\end{cor}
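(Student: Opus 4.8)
The plan is to sandwich $\ML(A[w])$ between $\C$ and $\C[x]$, and then to use Proposition \ref{prop:lnd} to collapse it. First I would record that $A[w]$ is an integral domain: since $P=x^2y+z^2+x+t^3$ is prime in $\C[x,y,z,t]$ (as already used in the proof of Proposition \ref{prop:lnd}), the ring $A=\C[X]$, and hence $A[w]$, has no zero divisors. This, together with $\mathrm{char}\,\C=0$, is all that will be needed at the final step.

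Next I would bound $\ML(A[w])$ from above by $\C[x]$. The partial derivative $\partial/\partial w$ is a locally nilpotent derivation of $A[w]$ whose kernel is exactly $A$, so $\ML(A[w])\subseteq A$. Moreover, every locally nilpotent derivation $D$ of $A$ extends to a locally nilpotent derivation $\tilde D$ of $A[w]$ via $\tilde D(\sum a_iw^i)=\sum D(a_i)w^i$, local nilpotence being inherited coefficientwise, and one has $\ker\tilde D\cap A=\ker D$. Intersecting over all such $D$ and using $\ML(A[w])\subseteq A$ yields $\ML(A[w])\subseteq\bigcap_D\ker D=\ML(\C[X])=\C[x]$, the last equality being the theorem of Makar-Limanov recalled in Section 2.

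Finally I would eliminate the remaining non-constant elements of $\C[x]$. Proposition \ref{prop:lnd} supplies a locally nilpotent derivation $\partial$ of $A[w]$ with $\partial(x)\neq 0$. For any non-constant $p\in\C[x]$ we have $\partial(p(x))=p'(x)\partial(x)$; since $\mathrm{char}\,\C=0$ the derivative $p'(x)$ is nonzero, and since $A[w]$ is a domain the product $p'(x)\partial(x)$ is nonzero as well. Hence $\ker\partial$ contains no non-constant polynomial in $x$, so $\ML(A[w])\cap\C[x]=\C$. Combined with the containment $\ML(A[w])\subseteq\C[x]$ of the previous paragraph, this gives $\ML(A[w])=\C$, i.e. the invariant is trivial.

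The bulk of the difficulty is already absorbed into Proposition \ref{prop:lnd}, whose explicit transport of $\Delta$ through the isomorphism of Proposition \ref{prop:iso-entre-algebres} produces the crucial derivation that moves $x$. The only genuinely new point here is the upper bound $\ML(A[w])\subseteq\C[x]$ together with the observation that one must rule out \emph{all} of $\C[x]\setminus\C$, not merely $x$ itself; both steps are short once the domain property and the characteristic-zero hypothesis are invoked, so I do not anticipate a serious obstacle beyond organizing these containments correctly.
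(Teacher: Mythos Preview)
Your argument is correct and follows the same overall two-step strategy as the paper: first bound $\ML(A[w])\subseteq\C[x]$, then invoke Proposition~\ref{prop:lnd} to rule out everything nonconstant. The one genuine difference lies in how the upper bound is obtained. The paper writes down two explicit locally nilpotent derivations $\partial_1=2z\,\partial/\partial y-x^2\,\partial/\partial z$ and $\partial_2=3t^2\,\partial/\partial y-x^2\,\partial/\partial t$ on $A[w]$ and asserts that the intersection of their kernels is $\C[x]$, whereas you establish the general cylinder inequality $\ML(A[w])\subseteq\ML(A)$ by combining $\partial/\partial w$ with the coefficientwise extension of each LND on $A$. Your route is slightly more conceptual and reusable (it works for any affine domain in place of $A$), while the paper's is more hands-on; in substance they are equivalent, since the paper's $\partial_1,\partial_2$ are precisely the extensions to $A[w]$ of the two LNDs that witness $\ML(A)=\C[x]$. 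Your final paragraph making explicit, via $p'(x)\partial(x)\neq 0$, why \emph{all} of $\C[x]\setminus\C$ is excluded is a welcome clarification of a point the paper leaves implicit.
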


\begin{proof}
Let $\partial_1$ and $\partial_2$ be the locally nilpotent
derivations on $A[w]$ defined by
$$\partial_1=2z\frac{\partial}{\partial y}-x^2\frac{\partial}{\partial z}\quad\text{and}\quad\partial_2=3t^2\frac{\partial}{\partial y}-x^2\frac{\partial}{\partial t}  .$$
We have $\Ker(\partial_1)\cap\Ker(\partial_2)=\C[x]$. Thus,
$\ML(A[w])\subset\C[x]$. Now, proposition \ref{prop:lnd} allows us
to conclude that $\ML(A[w])=\C$.
\end{proof}

\bibliographystyle{plain}

\bibliography{KRreferences}

\end{document}